\documentclass[sn-mathphys-num]{sn-jnl}


\usepackage{graphicx}%
\usepackage{multirow}%
\usepackage{amsmath,amssymb,amsfonts}%
\usepackage{amsthm}%
\usepackage{mathrsfs}%
\usepackage[title]{appendix}%
\usepackage{xcolor}%
\usepackage{textcomp}%
\usepackage{manyfoot}%
\usepackage{booktabs}%
\usepackage{algorithm}%
\usepackage{algorithmicx}%
\usepackage{algpseudocode}%
\usepackage{listings}%


\theoremstyle{thmstyleone}%
\newtheorem{theorem}{Theorem}
\newtheorem{proposition}[theorem]{Proposition}%

\theoremstyle{thmstyletwo}%
\newtheorem{example}{Example}%
\newtheorem{remark}{Remark}%

\theoremstyle{thmstylethree}%
\newtheorem{definition}{Definition}%

\raggedbottom

\begin{document}

\title[On riemannian connections and semi-simplicity of a Lie algebra]{On riemannian connections and semi-simplicity of a Lie algebra}


\author*[1]{\fnm{Manelo} \sur{Anona}}\email{mfanona@yahoo.fr}



\affil*[1]{\orgdiv{Department of Mathematics and Computer Science}, \orgname{Faculty of Science, University of Antananarivo}, \orgaddress{, \city{Antananarivo 101}, \postcode{906}, \country{Madagascar}}}




\abstract{Using a almost product structure defined by a spray, we give a necessary and sufficient condition, for a linear connection with vanishing torsion to be Riemannian and, for the semi-simplicity of Lie algebra of projectable vector fields which commute with a spray. We show the equivalence of the semi-simplicity of a finite dimensional Lie algebra to the coincidence of the derived ideal with its algebra, to the interiority of any derivation of the algebra and, to the semi-simplicity of its adjoint representation.}

\keywords{Differentiable manifold, Infinitesimal isometry, Lie algebra, Nijenhuis tensor, Riemannian manifold, Spray}


\pacs[MSC Classification]{Primary 53XX; Secondary 17B66, 53B05, 53C08}

\maketitle

\section{Introduction}\label{sec1}
The object of this paper is a review and a complement of our results in \cite{ANO2}, \cite{ANO3}, \cite{ANH} and \cite{ANH1}. All considered objects are smooth. Let $M$ be a connected paracompact differentiable manifold of dimension $n\geq 2$, $J$ the vector $1-$form defining the tangent structure, $C$ the Liouville field on the tangent space $TM$, $S$ a spray. We denote $\Gamma=[J,S]$,  $\Gamma$ is an almost product structure: $\Gamma^2=I$,  $I$ being the identity vector $1$-form. We can consider $\Gamma$  \cite{GRI} as a linear connection with vanishing torsion. The curvature of $\Gamma$ is then the Nijenhuis tensor of $h$,  $R=\frac{1}{2}[h,h]$, with $h=\frac{I+\Gamma}{2}$. We will give some properties of $R$. We then study a linear connection coming from a metric. At the end, we are interested in the Lie algebra $A_S=\{X\in\chi(TM)\mid  [X,S]=0\}$, where $\chi(TM)$ denotes the set of all vector fields on $TM$.

\section{Preliminaries}\label{sec2}
We recall the bracket of two vectors $1-$form $K$ and $L$ on a manifold $M$ \cite{FN1}, 
\begin{eqnarray*}
[K,L](X,Y)&=&[KX,LY]+[LX,KY]+KL[X,Y]+LK[X,Y]-K[LX,Y]\\&&-L[KX,Y]-K[X,LY]-L[X,KY]
\end{eqnarray*}
for all $X,\ Y\in\chi(M)$.\\
The bracket $ N_L = \frac{1}{2} [L, L] $ is called the Nijenhuis tensor of $ L $. The Lie derivative $L_X$ with respect to $ X $ applied to $ L $ can be written
\begin{equation*}
[X,L]Y=[X,LY]-L[X,Y].
\end{equation*}
The exterior derivation $ d_L $ is defined in \cite{ANO1}: $ d_L = [i_L, d] $.\\
Let $ \Gamma $ be an almost product structure. We denote
\begin{eqnarray*}
	h=\frac{1}{2}(I+\Gamma)\ \text{and} \ v=\frac{1}{2}(I-\Gamma), 
\end{eqnarray*}
The vector $ 1-$form $ h $ is the horizontal projector, projector of the subspace corresponding to the eigenvalue $ + 1 $, and $ v $ the vertical projector corresponding to the eigenvalue $ -1 $. The curvature of $ \Gamma $ is defined by $ R = \frac{1}{2}[h, h] $,
which is also equal to $ \frac{1}{8}[\Gamma, \Gamma] $.\\
The Lie algebra $ A_\Gamma $ is defined by \begin{eqnarray*}
A_\Gamma=\{X\in\chi(TM)\ \text{such that}\ [X,\Gamma]=0 \}.
\end{eqnarray*}
The nullity space of the curvature $ R $ is:\begin{eqnarray*}
N_R=\{X\in \chi(TM)\ \text{ such that}\ R(X,Y)=0,\ \forall\ Y\in \chi(TM)\}.
\end{eqnarray*}
\begin{definition}
A second order differential equation on a manifold $M$ is a vector field $S$ on the tangent space $TM$ such that $JS=C$.\\
Such a vector field on $TM$ is also called a semi-spray on $M$, $S$ is a spray on $M$ if $S$ is homogeneous of degree $1$: $[C,S]=S$.\\
In what follows, we use the notation in \cite{GRI} and \cite{RUN} to express a geodesic spray of a linear connection. In local natural coordinates on an open set $U$ of $M$, $(x^i, y^j)$ are the coordinates in $TU$,  a spray $ S $ is written \begin{equation*}
S= y^i\frac{\partial}{\partial x^i}-2G^i(x^1,\ldots,x^n,y^1,\ldots,y^n)\frac{\partial}{\partial y^i}.
\end{equation*} 
\end{definition}
For a connection $\Gamma=[J,S]$, the coefficients of $ \Gamma $ become $ \Gamma^j_i = \frac{\partial G^j} {\partial y^i} $ and the projector horizontal is
\begin{eqnarray*}
		h(\frac{\partial}{\partial x^i})=\frac{\partial}{\partial x^i}-\Gamma^j_i\frac{\partial}{\partial y^j},\
		h(\frac{\partial}{\partial y^j})=0
\end{eqnarray*}
the projector vertical
	\begin{eqnarray*}
		v(\frac{\partial}{\partial x^i})=\Gamma^j_i\frac{\partial}{\partial y^j},\
		v(\frac{\partial}{\partial y^j})= \frac{\partial}{\partial y^j}
	\end{eqnarray*}
The curvature $R=\frac{1}{2}[h,h]$ become
\begin{eqnarray*}
	&& R=\frac{1}{2}R^k_{ij}dx^i\wedge dx^j\otimes \frac{\partial}{\partial y^k}\ \text{with}\ R^k_{ij}=\frac{\partial \Gamma^k_i}{\partial x^j}-\frac{\partial \Gamma^k_j}{\partial x^i}+\Gamma^l_i \frac{\partial \Gamma^k_j}{\partial y^l}-\Gamma^l_j \frac{\partial \Gamma^k_i}{\partial y^l}, \\ && i,j,k,l\in\{1,\ldots,n\}.
\end{eqnarray*}
As the functions $ G^k $ are homogeneous of degree $ 2 $, the coefficients $ \Gamma^k_{ij} = \frac{\partial^2 G^k} {\partial y^i \partial y^j} $ do not depend on $ y^i $, $ i \in \{1, \ldots, n \} $. We then have $ R^ k_{ij} = y^l R^k_ {l, ij} (x) $, the $ R^k_{l, ij} (x) $ depend only on the coordinates of the manifold $ M $.\\
\section{Properties of curvature $R$}
\begin{proposition}[\cite{RRA1}]\label{P3.1}
The horizontal nullity space of the curvature $R$ is involutive. The elements of $A_\Gamma$ are projectable vector fields.
\end{proposition}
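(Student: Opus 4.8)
The plan is to reduce both assertions to elementary facts about brackets of vector fields on $TM$, exploiting the decomposition $\chi(TM)=\mathrm{Im}\,h\oplus\mathrm{Im}\,v$, the projector relations $h+v=I$, $h^2=h$, $hv=vh=0$, $\ker h=\mathrm{Im}\,v$, and $\Gamma=2h-I$. I write $\delta_i:=h\!\left(\frac{\partial}{\partial x^i}\right)=\frac{\partial}{\partial x^i}-\Gamma^j_i\frac{\partial}{\partial y^j}$ for the local horizontal frame, and understand by the horizontal nullity space the set $\mathcal{H}_R:=N_R\cap\mathrm{Im}\,h$. The engine of the first part is that $R$ is vertical-valued and kills vertical vectors (clear from $R=\tfrac12R^k_{ij}\,dx^i\wedge dx^j\otimes\frac{\partial}{\partial y^k}$), together with the identity
\[
R(X,Y)=v[X,Y]\qquad\text{for horizontal }X,Y,
\]
obtained either by expanding $R=\tfrac12[h,h]$ with the bracket formula recalled in the preliminaries and using $h^2=h$, or from $[\delta_i,\delta_j]=R^k_{ij}\frac{\partial}{\partial y^k}$. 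Consequently, for horizontal $X$, membership $X\in\mathcal{H}_R$ is equivalent to $[X,W]$ being horizontal for every horizontal $W$; moreover $R(X,Y)=R(hX,hY)$, so it suffices throughout to test $R$ on horizontal arguments.

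\textbf{Involutivity.} Let $X,Y\in\mathcal{H}_R$. Then $v[X,Y]=R(X,Y)=0$, so $[X,Y]$ is horizontal. For an arbitrary horizontal $Z$ the Jacobi identity gives
\[
[[X,Y],Z]=[X,[Y,Z]]-[Y,[X,Z]].
\]
Here $[Y,Z]$ and $[X,Z]$ are horizontal because $Y,X\in\mathcal{H}_R$ and $Z$ is horizontal, and applying the same property once more (to $X$ against the horizontal field $[Y,Z]$, and to $Y$ against the horizontal field $[X,Z]$) shows that $[X,[Y,Z]]$ and $[Y,[X,Z]]$ are horizontal; hence $[[X,Y],Z]$ is horizontal, i.e. $R([X,Y],Z)=0$. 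Since this holds for every horizontal $Z$ and $R$ annihilates vertical vectors, $[X,Y]\in N_R$, so $[X,Y]\in\mathcal{H}_R$.

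\textbf{Projectability of $A_\Gamma$.} Since $[X,I]=0$ and $\Gamma=2h-I$ we have $[X,\Gamma]=2[X,h]$, hence $X\in A_\Gamma$ is equivalent to $[X,h]=0$, that is $[X,hY]=h[X,Y]$ for all $Y$. Taking $Y$ vertical, so $hY=0$, yields $h[X,Y]=0$, i.e. $[X,Y]\in\ker h=\mathrm{Im}\,v$: thus $X$ sends vertical fields to vertical fields. In natural coordinates $X=a^i\frac{\partial}{\partial x^i}+b^j\frac{\partial}{\partial y^j}$, and choosing $Y=\frac{\partial}{\partial y^k}$ the $\frac{\partial}{\partial x^i}$-component of $[X,Y]$ equals $-\frac{\partial a^i}{\partial y^k}$; its vanishing for all $i,k$ says that the $a^i$ depend only on $(x^1,\dots,x^n)$, which is precisely the projectability of $X$.

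The step that needs care is the opening identity above---identifying the restriction of $R$ to the horizontal distribution with the vertical part of the Lie bracket, equivalently with the obstruction to the integrability of $\mathrm{Im}\,h$; once it is in place, involutivity is pure Jacobi and projectability is a one-line coordinate check. One could instead derive involutivity from the Bianchi identity $[h,R]=0$, which in coordinates reads $\sum_{(i,j,k)}\bigl(\delta_i R^l_{jk}+R^p_{jk}\Gamma^l_{ip}\bigr)=0$, by contracting with the components of two nullity fields and using $R^k_{ij}=-R^k_{ji}$, but the Jacobi route avoids that computation.
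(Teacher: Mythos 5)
Your proof is correct and follows essentially the same route as the paper: the identity $R(hX,hY)=v[hX,hY]$ plus the Jacobi identity for involutivity, and $[X,hY]=h[X,Y]$ tested on vertical $Y$ for projectability. You merely make explicit two steps the paper leaves implicit (that $R$ is semi-basic so it suffices to test on horizontal arguments, and the final coordinate check that $h[X,Y]=0$ for vertical $Y$ forces the $a^i$ to be independent of the fibre coordinates).
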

\begin{proof}
From the expression of the curvature $R$ and taking into account $h^2=h$, we have
\begin{equation*}
R(hX,Y)=v[hX,hY],
\end{equation*}
If $hX\in N_R$, we obtain $v[hX,hY]=0$ $\forall Y\in\chi(TM)$.\\
Using the Jacobi Identity, for all $hX$ and $hY\in N_R$, we find $v[[hX,hY],hZ]=0$ $\forall Z\in\chi(TM)$. As we have $h[hX,hY]=[hX,hY]$, the horizontal nullity space of the curvature $R$ is involutive.\\
We notice that $A_\Gamma=A_h=A_v$.\\
For $X\in A_h$, we obtain
\begin{equation*}
[X,hY]=h[X,Y]\ \forall Y\in\chi(TM).
\end{equation*}
If $Y$ is a vertical vector field, we have $h[X,Y]=0$. This means that $X$ is a projectable vector field.
\end{proof}
\begin{proposition}[\cite{ANO2}]\label{P3.2}
Let $X$ be a projectable vector field. The two following relations are equivalent
\begin{enumerate}
	\item[$i)$] $[hX,J]=0$
	\item[$ii)$] $[JX,h]=0$
\end{enumerate} 
\end{proposition}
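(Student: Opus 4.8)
The plan is to compute the two vector $1$-forms $[hX,J]$ and $[JX,h]$ in a natural local chart and to show that each of them vanishes exactly when the projection $\overline{X}$ of $X$ onto $M$ is parallel for the connection $\Gamma$; in fact the computation will yield the stronger identity $[JX,h]=-[hX,J]$, valid for every projectable $X$, from which the equivalence $i)\Longleftrightarrow ii)$ follows at once.

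Let $(x^i,y^i)$ be natural coordinates on $TU$. Since $X$ is projectable it reads $X=X^i(x)\frac{\partial}{\partial x^i}+U^i(x,y)\frac{\partial}{\partial y^i}$, and the coordinate expressions of $h$ and $J$ recalled above give $hX=X^i(x)\bigl(\frac{\partial}{\partial x^i}-\Gamma^j_i\frac{\partial}{\partial y^j}\bigr)$ and $JX=X^i(x)\frac{\partial}{\partial y^i}$. In particular both $hX$ and $JX$ depend only on the functions $X^i(x)$, that is, only on $\overline{X}$, and not on the fibre components $U^i$.

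First I would evaluate $[hX,J]Y=[hX,JY]-J[hX,Y]$ on the coordinate fields. Since $J\frac{\partial}{\partial y^i}=0$ while $[hX,\frac{\partial}{\partial y^i}]$ is vertical, one gets $[hX,J]\bigl(\frac{\partial}{\partial y^i}\bigr)=0$; and, using $\frac{\partial\Gamma^j_i}{\partial y^l}=\Gamma^j_{il}$ together with the fact recalled above that the $\Gamma^k_{ij}$ do not depend on $y$ and are symmetric in their lower indices, a short computation gives
\[
[hX,J]\Bigl(\frac{\partial}{\partial x^i}\Bigr)=\Bigl(\frac{\partial X^k}{\partial x^i}+\Gamma^k_{ij}X^j\Bigr)\frac{\partial}{\partial y^k}.
\]
Hence $[hX,J]=0$ if and only if $\frac{\partial X^k}{\partial x^i}+\Gamma^k_{ij}X^j=0$ for all $i,k$, i.e.\ $\overline{X}$ is parallel for $\Gamma$.

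Then I would run the same computation for $[JX,h]Y=[JX,hY]-h[JX,Y]$. Here too $[JX,h]$ annihilates the vertical coordinate fields, since $[JX,\frac{\partial}{\partial y^i}]=0$ because the components of $JX$ depend only on $x$, while on $\frac{\partial}{\partial x^i}$ one finds
\[
[JX,h]\Bigl(\frac{\partial}{\partial x^i}\Bigr)=-\Bigl(\frac{\partial X^k}{\partial x^i}+\Gamma^k_{ij}X^j\Bigr)\frac{\partial}{\partial y^k}=-[hX,J]\Bigl(\frac{\partial}{\partial x^i}\Bigr).
\]
Thus $[JX,h]=-[hX,J]$, so $[JX,h]=0$ is again equivalent to the parallelism of $\overline{X}$, which proves $i)\Longleftrightarrow ii)$. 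The only delicate point is the bookkeeping in the two bracket computations — keeping track of the $y$-derivatives of the coefficients $\Gamma^j_i$ and using the symmetry $\Gamma^k_{ij}=\Gamma^k_{ji}$ — but there is no real obstacle here; the homogeneity of the spray (equivalently, that $\Gamma$ is the geodesic connection of a linear connection), already invoked above to make the $\Gamma^k_{ij}$ independent of $y$, is precisely what makes the two computations close up.
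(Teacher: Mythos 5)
Your computation is correct, and it actually supplies more than the paper does: the paper's ``proof'' of Proposition \ref{P3.2} is only a citation to Proposition 3 of \cite{ANO2}, so your coordinate verification is a genuine self-contained argument. I checked the two bracket evaluations: with $hX=X^i(\frac{\partial}{\partial x^i}-\Gamma^j_i\frac{\partial}{\partial y^j})$ and $JX=X^i\frac{\partial}{\partial y^i}$ one indeed finds that both $1$-forms kill the vertical coordinate fields (this is where projectability, i.e.\ $X^i=X^i(x)$, is used) and that on $\frac{\partial}{\partial x^i}$ they give $\pm\bigl(\frac{\partial X^k}{\partial x^i}+\Gamma^k_{ij}X^j\bigr)\frac{\partial}{\partial y^k}$, so the stronger identity $[JX,h]=-[hX,J]$ holds and the equivalence follows. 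Evaluating on a coordinate basis suffices because the Fr\"olicher--Nijenhuis bracket of a vector field with a vector $1$-form is again a (tensorial) vector $1$-form. One small correction to your closing remark: what makes the two computations match is not really the homogeneity of the spray or the $y$-independence of the $\Gamma^k_{ij}$, but only the symmetry $\frac{\partial\Gamma^j_l}{\partial y^i}=\frac{\partial\Gamma^j_i}{\partial y^l}$, which is automatic for $\Gamma=[J,S]$ since both equal $\frac{\partial^2(2G^j)}{\partial y^i\partial y^l}$; so the identity holds for any semi-spray, homogeneous or not. This is a point in favour of your argument rather than against it, but the attribution of the mechanism should be to the vanishing torsion of $\Gamma$ rather than to homogeneity.
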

\begin{proof}
See proposition 3 of \cite{ANO2}.
\end{proof}
\begin{proposition}[\cite{ANH}]\label{P3.3}
We assume that $hN_R$ is generated as a module by projectable vector fields. If the rank of the nullity space $hN_R$ of the curvature $R$ is constant, there exists a local basis of $hN_R$ satisfying Proposition \ref{P3.2}.
\end{proposition}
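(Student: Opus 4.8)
The plan is to read ``a local basis of $hN_R$ satisfying Proposition~\ref{P3.2}'' as a local frame $hX_1,\dots,hX_r$ of $hN_R$ in which each $X_\alpha$ is a projectable vector field (so that Proposition~\ref{P3.2} applies to it), and to build it from a family of module generators, using Proposition~\ref{P3.1} to control the bracket structure. Note the structural facts first: by Proposition~\ref{P3.1} the distribution $hN_R$ is involutive, and since its rank is by hypothesis a constant $r$ it is integrable, so near each point of $TM$ there are coordinates adapted to the resulting $r$-dimensional foliation; moreover $h[hX,hY]=[hX,hY]$ for $hX,hY\in hN_R$, so any local frame of $hN_R$ automatically consists of horizontal fields whose brackets stay in $hN_R$, and the only delicate point is to obtain a frame made of horizontal parts of projectable fields.

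I would extract such a frame as follows. Let $X_1,X_2,\dots$ be projectable vector fields with $\{hX_k\}$ generating $hN_R$ as a $C^\infty(TM)$-module; note $hX_k$ is the horizontal lift of $\overline X_k:=\pi_*X_k$. Fixing $p\in TM$, the values $(hX_k)_p$ span the $r$-dimensional fibre $(hN_R)_p$, so we may pick indices with $(hX_1)_p,\dots,(hX_r)_p$ linearly independent; by continuity they remain independent on a neighbourhood $U$ of $p$, and by constancy of the rank they span $hN_R$ over $U$. Thus $hX_1,\dots,hX_r$ is a local basis of $hN_R$, and since every $X_\alpha$ is projectable, Proposition~\ref{P3.2} applies to each, giving $[hX_\alpha,J]=0\iff[JX_\alpha,h]=0$.

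If in addition the equivalent conditions of Proposition~\ref{P3.2} are required to hold along this basis, a direct coordinate computation gives $[hX_\alpha,J]=\big(\partial_k\overline X^{j}_\alpha+\Gamma^{j}_{ki}\overline X^{i}_\alpha\big)\,\partial/\partial y^{j}\otimes dx^{k}$, so the conditions amount to each $\overline X_\alpha$ being parallel on $M$. Since $hX_k$ is a section of $hN_R$, $\overline X_k$ lies in the nullity distribution $\mathcal N$ of the curvature of the connection on $M$, and here the hypotheses bite hard: the first Bianchi identity shows that over a nullity direction $u\in\mathcal N_x\setminus\{0\}$ the whole fibre is horizontal-null, so constancy of the rank of $hN_R$ together with ``$hN_R$ generated by projectable fields'' forces $hN_R$ to be the horizontal lift of $\mathcal N$, with $\mathcal N$ of constant rank $r$; hence either $r=0$ (and the assertion is vacuous) or $\mathcal N=TM$, i.e.\ the connection on $M$ is flat. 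In the flat case, in affine coordinates $\Gamma^{k}_{ij}=0$, so $h(\partial/\partial x^{i})=\partial/\partial x^{i}$ and $[\partial/\partial x^{i},J]=0$, and the coordinate frame is already the required basis. The main obstacle is precisely this rigidity step — controlling the dependence on the fibre variable $y$ well enough to reduce to the flat situation; after that everything is routine bookkeeping with $h$, $v$, $J$ and Proposition~\ref{P3.2}.
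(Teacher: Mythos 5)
The paper itself offers no proof of this statement---it simply refers to Proposition~4 of \cite{ANH}---so your argument has to stand on its own, and as written it does not. Your first, ``weak'' reading (a frame of $hN_R$ made of projectable fields, to which Proposition~\ref{P3.2} merely \emph{applies}) cannot be the intended one: Proposition~\ref{P3.2} holds for every projectable field, so that reading would make the conclusion empty, and the way Proposition~\ref{P5.3} invokes the present statement (to produce horizontal elements of $\overline{A_\Gamma}$, i.e.\ complete lifts that are horizontal, which forces $\partial_k X^i+\Gamma^i_{jk}X^j=0$) shows the basis is required to actually verify $[hX_\alpha,J]=0$. You correctly compute that this condition means $\overline X_\alpha$ is parallel for the induced connection on $M$, and you correctly deduce from the hypothesis of projectable generators and the linearity of $R^k_{ij}=y^lR^k_{l,ij}$ in $y$ that $hN_R$ is the horizontal lift of the classical nullity distribution $\mathcal N$ on $M$. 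So the real content is: $\mathcal N$, of constant rank, admits a local frame of parallel vector fields.

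The fatal step is the dichotomy ``either $r=0$ or $\mathcal N=TM$''. It does not follow from the first Bianchi identity and it is false: for a product connection on $M_1\times M_2$ with $M_1$ flat and $M_2$ non-flat, $\mathcal N$ contains $TM_1$ and (generically) equals it, so it has constant rank $\dim M_1$ strictly between $0$ and $n$ while the connection is not flat. What the first Bianchi identity actually yields is $R(v,w)u=0$ for every $u\in\mathcal N$, i.e.\ the curvature of the connection \emph{restricted to the subbundle} $\mathcal N$ vanishes---which is exactly the integrability condition needed to solve $\nabla u=0$ with prescribed values in $\mathcal N_{x_0}$, \emph{provided} one first shows that $\mathcal N$ is stable under $\nabla_Z$ for arbitrary directions $Z$ (not just the auto-parallelism along $\mathcal N$ that underlies the involutivity in Proposition~\ref{P3.1}). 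That parallelism of $\mathcal N$ is the genuine content of the proposition, and it is nowhere established in your argument; replacing it by a reduction to the flat case leaves a hole precisely where the work lies.
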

\begin{proof}
See proposition 4 of \cite{ANH}.
\end{proof}
 \section{Riemannian manifolds}
Given a function $E$ from $\mathcal{T}M=TM-\{0\}$ in $\mathbb{R}^+$, with $E(0)=0$, $\mathcal{C}^\infty$ on $\mathcal{T}M$, $\mathcal{C}^2$ on the null section, homogeneous of degree two, such that $dd_JE$ has a maximal rank. The function $E$ defines a Riemannian manifold on $M$. The map $E$ is called an energy  function, its fundamental form $\Omega=dd_JE$ defines a spray $S$ by $i_Sdd_JE=-dE$ \cite{KLE}, the derivation $i_S$ being the inner product with respect to $S$. The vector $1-$form $\Gamma=[J,S]$ is called the canonical connection \cite{GRI}. The fundamental form $\Omega$ defines a metric $g$ on the vertical bundle by $g(JX,JY)=\Omega(JX,Y)$, for all $X$, $Y\in\chi(TM)$. There is \cite{GRI}, one and only one metric lift $D$ of the canonical connection such that:
\begin{eqnarray*}
&&J\mathbb{T}(hX,hY)=0,\ \mathbb{T}(JX,JY)=0\ (\mathbb{T}(X,Y)=D_XY-D_YX-[X,Y]);\\
&& DJ=0;\ DC=v; D\Gamma=0;\ Dg=0.
\end{eqnarray*}
The linear connection $D$ is called Cartan connection. We have
\begin{equation*}
D_{JX}JY=[J,JY]X,\ D_{hX}JY=[h,JY]X.
\end{equation*}
From the linear connection $D$, we associate a curvature
\begin{equation}\label{E4.1}
\mathcal{R}(X,Y)Z=D_{hX}D_{hY}JZ-D_{hY}D_{hX}JZ-D_{[hX,hY]}JZ
\end{equation}
for all $X$, $Y$, $Z\in\chi(TM)$. The relationship between the curvature $\mathcal{R}$ and $R$ is 
\begin{equation*}
\mathcal{R}(X,Y)Z=J[Z,R(X,Y)]-[JZ,R(X,Y)]+R([JZ,X],Y)+R(X,[JZ,Y]).
\end{equation*}
for all $X$, $Y$, $Z\in\chi(TM)$. In particular, 
\begin{equation*}
\mathcal{R}(X,Y)S=-R(X,Y).
\end{equation*}
In natural local coordinates on an open set $U$ of $M$, $(x^i,y^j)\in TU$, the energy function is written
\begin{equation*}
E=\frac{1}{2}g_{ij}(x^1,\ldots,x^n)y^iy^j,
\end{equation*}
where $g_{ij}(x^1,\ldots,x^n)$ are symmetric positive functions such that the matrix $(g_{ij}(x^1,\ldots,x^n))$ is invertible. And the relation $i_Sdd_JE=-dE$ gives the spray $S$
\begin{equation*}
S= y^i\frac{\partial}{\partial x^i}-2G^i(x^1,\ldots,x^n,y^1,\ldots,y^n)\frac{\partial}{\partial y^i},
\end{equation*} 
with $G_k=\frac{1}{2}y^iy^j\gamma_{ikj}$,\\
where $\gamma_{ikj}=\frac{1}{2}(\frac{\partial g_{kj}}{\partial x^i}+\frac{\partial g_{ik}}{\partial x^j}-\frac{\partial g_{ij}}{\partial x^k})$ and $\gamma_{ij}^k=g^{kl}\gamma_{ilj}$.\\
We have $G^k=\frac{1}{2}y^iy^j\gamma_{ij}^k$.
\begin{proposition}\label{P4.1}
Let $E$ be an energy function, $\Gamma$ a connection such that $\Gamma=[J,S]$. The following two relationship are equivalent:
\begin{enumerate}
	\item[$i)$] $i_Sdd_JE=-dE$;
	\item[$ii)$] $d_hE=0$.
\end{enumerate}
\end{proposition}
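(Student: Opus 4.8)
The plan is to reduce the equivalence to a single intrinsic identity together with one short observation. Since $L\mapsto d_L$ is linear and $d_I=d$, the splitting $h=\frac{1}{2}(I+\Gamma)$ gives at once $d_hE=\frac{1}{2}(dE+d_\Gamma E)$, so that $ii)$ is the same as $d_\Gamma E=-dE$. The core step is to prove the identity
\begin{equation*}
d_\Gamma E=d_J(i_S\,dE)-i_S\,dd_JE-2\,dE.
\end{equation*}
To obtain it I would expand, for an arbitrary vector field $Y$, $\Gamma Y=[J,S]Y=[JY,S]-J[Y,S]$, so $(d_\Gamma E)(Y)=[JY,S]\cdot E-(d_JE)([Y,S])$; writing $[JY,S]\cdot E=(JY)(S\cdot E)-S((d_JE)(Y))$ and using $(JY)(S\cdot E)=(d_J(i_S\,dE))(Y)$ together with $S((d_JE)(Y))=(L_Sd_JE)(Y)+(d_JE)([S,Y])$, the terms in $[S,Y]$ and $[Y,S]$ cancel and one is left with $d_\Gamma E=d_J(i_S\,dE)-L_Sd_JE$. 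Then Cartan's formula $L_S=i_Sd+di_S$ applied to the $1$-form $d_JE$, combined with $i_Sd_JE=dE(JS)=dE(C)=C\cdot E=2E$ (since $E$ is homogeneous of degree two), gives $L_Sd_JE=i_S\,dd_JE+2\,dE$, which yields the stated identity.

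The observation is that each of $i)$ and $ii)$ separately forces $S\cdot E=i_S\,dE=0$, so that the term $d_J(i_S\,dE)$ drops out of the identity. Under $i)$ this is immediate by antisymmetry: $S\cdot E=dE(S)=-(i_S\,dd_JE)(S)=-(dd_JE)(S,S)=0$. Under $ii)$ I would use that $S$ is a spray: $\Gamma S=[J,S]S=[JS,S]-J[S,S]=[C,S]=S$, hence $vS=\frac{1}{2}(I-\Gamma)S=0$; and $d_hE=0$ with $h+v=I$ gives $dE=d_vE$, so $S\cdot E=dE(S)=dE(vS)=0$. In both cases the identity collapses to $d_\Gamma E=-i_S\,dd_JE-2\,dE$, and feeding this into $d_hE=\frac{1}{2}(dE+d_\Gamma E)$ closes the argument: if $d_hE=0$ then $dE=-d_\Gamma E=i_S\,dd_JE+2\,dE$, i.e. $i_S\,dd_JE=-dE$; and if $i_S\,dd_JE=-dE$ then $d_\Gamma E=dE-2\,dE=-dE$, i.e. $d_hE=0$.

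The only care needed is bookkeeping — fixing the sign convention for the Fr\"olicher--Nijenhuis bracket $[J,S]$ (I would cross-check $\Gamma Y=[JY,S]-J[Y,S]$ against the local formula $h(\partial/\partial x^i)=\partial/\partial x^i-\Gamma^j_i\,\partial/\partial y^j$ recalled above) and tracking the factor $2$ coming from the degree-two homogeneity of $E$; there is no serious obstacle, and in particular the maximal-rank hypothesis on $dd_JE$ is never used, only $J^2=0$, $JS=C$, $[C,S]=S$ and $C\cdot E=2E$. If a coordinate proof is preferred, writing $E=\frac{1}{2}g_{ij}(x)y^iy^j$ and $\Gamma^j_i=\Gamma^j_{ik}(x)y^k$ turns $d_hE=0$ into $\frac{1}{2}\partial_ig_{jk}\,y^jy^k=\Gamma^j_{il}g_{jk}\,y^ly^k$ for all $y$, hence, after symmetrizing, $\partial_ig_{lk}=\Gamma^j_{il}g_{jk}+\Gamma^j_{ik}g_{jl}$; the classical three-index combination then forces $\Gamma^k_{ij}=\gamma^k_{ij}$, i.e. $G^k=\frac{1}{2}\gamma^k_{ij}y^iy^j$, which is precisely the spray singled out by $i_S\,dd_JE=-dE$.
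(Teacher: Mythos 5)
Your proof is correct, and it is complete where the paper is not: the paper simply refers to Proposition 1 of \cite{ANH} for this statement. Your derivation — reducing $ii)$ to $d_\Gamma E=-dE$ via $d_h=\frac{1}{2}(d_I+d_\Gamma)$, establishing $d_\Gamma E=d_J(i_S dE)-L_S d_JE$ pointwise, and then invoking Cartan's formula together with $i_S d_JE=C\cdot E=2E$ — is the standard Grifone--Klein route to this equivalence, and your observation that each hypothesis separately kills the term $d_J(i_S dE)$ (by antisymmetry of $dd_JE$ under $i)$, and by $vS=0$ under $ii)$) is exactly the point that makes both implications fall out of the single identity. The sign check of $\Gamma Y=[JY,S]-J[Y,S]$ against the local expression of $h$ is the right precaution and does come out consistently, so there is no gap.
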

\begin{proof}
See proposition 1 \cite{ANH}.
\end{proof}
\begin{proposition}\label{P4.2}
For a connection satisfying the Proposition \ref{P4.1}, the scalar $1-$form $d_vE$ is completely integrable.
\end{proposition}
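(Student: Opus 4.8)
The plan is to deduce from the hypothesis that $d_vE$ is in fact an exact $1$-form, after which complete integrability is automatic. The starting point is that $d_L=[i_L,d]$ is additive in the vector $1$-form $L$ and that $i_h+i_v=i_I$, since $h+v=I$; moreover $i_I$ multiplies a $p$-form by $p$, so $d_I=[i_I,d]=d$ on every form. Applied to the function $E$, for which $i_hE=i_vE=0$, this yields
\begin{equation*}
d_hE+d_vE=d_IE=dE.
\end{equation*}
By Proposition \ref{P4.1} a connection for which $i_Sdd_JE=-dE$ also satisfies $d_hE=0$, and therefore $d_vE=dE$.

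Once $d_vE=dE$ is established, $d_vE$ is exact, hence closed, so that $d_vE\wedge d(d_vE)=0$; by the Frobenius criterion this is precisely the complete integrability of the scalar $1$-form $d_vE$. I would also remark that on $\mathcal{T}M$ this form never vanishes: pairing it with the Liouville field $C$, which is vertical so that $vC=C$, gives $d_vE(C)=dE(C)=CE=2E>0$ by the homogeneity of degree two and the positivity of $E$ on $\mathcal{T}M$. Thus $d_vE$ genuinely defines a codimension-one foliation of $\mathcal{T}M$, namely the foliation by the level hypersurfaces of the energy $E$.

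For an independent check I would instead argue in natural local coordinates: from $v(\partial/\partial x^i)=\Gamma^j_i\,\partial/\partial y^j$, $v(\partial/\partial y^j)=\partial/\partial y^j$ and $E=\tfrac12 g_{ij}y^iy^j$ one finds that $d_vE$ has $dx^k$-component $g_{ij}\Gamma^j_k\,y^i$ and $dy^k$-component $g_{ik}y^i$; substituting $\Gamma^j_k=y^a\gamma^j_{ak}$, using $g_{ij}\gamma^j_{ak}=\gamma_{aik}$ and the symmetries of the $\gamma$'s, the $dx^k$-component collapses to $\tfrac12\,\partial_k g_{ij}\,y^iy^j=\partial E/\partial x^k$, so that $d_vE=dE$ term by term. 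There is no substantive obstacle here; the only point to watch is the degree and sign bookkeeping in $d_L=[i_L,d]$ and the normalisation $d_I=d$ on which the identity $d_hE+d_vE=dE$ rests.
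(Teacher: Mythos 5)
Your proof is correct, but it takes a genuinely different and in fact shorter route than the paper. You exploit the additivity of $L\mapsto d_L$ together with $h+v=I$ and $d_I=d$ to get $d_hE+d_vE=dE$, so the hypothesis $d_hE=0$ of Proposition \ref{P4.1} forces $d_vE=dE$; an exact $1$-form is closed, and closedness makes the Frobenius condition $d_vE\wedge d(d_vE)=0$ trivial. The coordinate verification you add (the $dx^k$-component collapsing to $\partial E/\partial x^k$ via the symmetries of the Christoffel symbols) confirms the identity independently. The paper instead works at the level of the kernel distribution: it identifies $\operatorname{Ker} d_vE$ as spanned by $\operatorname{Im}h$ and the vertical fields $JY$ with $L_{JY}E=0$, and checks involutivity bracket by bracket, using $[hX,hY]=h[hX,hY]+R(X,Y)$ together with the implication $d_hE=0\Rightarrow d_RE=0$ (which comes from $d_R=d_h\circ d_h$). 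What your argument buys is economy and a sharper conclusion --- $d_vE$ is not merely integrable but exact, its foliation being the level hypersurfaces of $E$, and your remark that $i_Cd_vE=2E>0$ shows the form is nowhere zero on $\mathcal{T}M$. What the paper's longer route buys is that the same kernel analysis is reused almost verbatim in Theorem \ref{T4.4}, where one only assumes $d_RE_0=0$ rather than $d_hE_0=0$, so that $d_vE_0\neq dE_0$ in general and complete integrability becomes a genuine, non-automatic hypothesis; your identity $d_hE+d_vE=dE$ is in fact also the reason the paper can pass from $dd_vE_0=\frac{dE_0}{E_0}\wedge d_vE_0$ to the corresponding equation for $d_hE_0$ there. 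No gap in your argument.
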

\begin{proof}
The Kernel of $d_vE$ is formed by vector fields belonging to the horizontal space $Im h$ ($v\circ h=0$) and vertical vector fields $JY$ such that $L_{JY}E=0$, $Y\in Im h$, taking into account $vJ=J$.\\
As we have
\begin{equation*}
[hX,hY]=h[hX,hY]+v[hX,hY]= h[hX,hY]+R(X,Y),
\end{equation*}
for all $X$, $Y\in\chi(TM)$, and that $d_hE=0$ implies $d_RE=0$. We obtain
\begin{equation*}
[hX,hY]\in Ker d_vE.
\end{equation*}
Its remains to show that $L_{v[hX,JY]}E=0$ $\forall X\in Im h$ and, $Y\in Im h$ satisfying $L_{JY}E=0$. This is immediate since we have $v=I-h$.
\end{proof}
\begin{proposition}\label{P4.3}
On a Riemannian manifold $(M,E)$, the horizontal nullity space $hN_R$ of the curvature $R$ is generated as a module by projectable vector fields belonging to $hN_R$ and, orthogonal to the image space $Im R$ of the curvature $R$ and $hN_R=hN_\mathcal{R}$.
\end{proposition}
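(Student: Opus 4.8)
The plan is to exploit the Riemannian structure — in particular the existence of the metric $g$ on the vertical bundle and the Cartan connection $D$ with $Dg=0$ — to upgrade the purely algebraic statement of Proposition~\ref{P3.1} into one about an orthogonal, projectable generating set. First I would recall from Proposition~\ref{P3.1} that $hN_R$ is involutive and that, via the relation $R(hX,Y)=v[hX,hY]$, membership in $hN_R$ is detected by the vanishing of $d_RE$-type expressions; combined with Proposition~\ref{P4.2} this shows that $hN_R$ sits inside $\operatorname{Ker} d_vE$, so the metric $g$ restricts nicely and $hN_R$ has locally constant rank on the (open dense) set where things are regular. On that set I would invoke Proposition~\ref{P3.3}: since we will have arranged a generating family of projectable vector fields, there is a local basis of $hN_R$ satisfying Proposition~\ref{P3.2}, i.e. $[hX,J]=0$ for each basis element.

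The orthogonality to $\operatorname{Im}R$ is the heart of the matter. Here I would use the metric compatibility $Dg=0$ together with the curvature identities relating $\mathcal{R}$ and $R$ stated just before Proposition~\ref{P4.1}, in particular $\mathcal{R}(X,Y)S=-R(X,Y)$. The idea is that for $hX\in hN_R$ one has $D_{hX}D_{hY}JZ-D_{hY}D_{hX}JZ-D_{[hX,hY]}JZ=0$ because the bracket $[hX,hY]$ stays horizontal and the nullity condition kills the relevant terms; feeding this into the $g$-symmetry of $\mathcal{R}$ (which follows from $Dg=0$, exactly as in the classical Levi-Civita case) yields $g(R(Y,Z), JhX)=g(\mathcal{R}(\cdot)\cdot, \cdot)$ with one slot in $hN_R$, forcing $g$-orthogonality of $hN_R$ to $\operatorname{Im}R$. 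One then splits an arbitrary element of $hN_R$ into its $g$-orthogonal projection onto the span of projectable nullity fields and a remainder, and shows the remainder lies in $\operatorname{Im}R\cap hN_R=\{0\}$ by the orthogonality just proved; since the projectable fields generate after this reduction, $hN_R$ is generated as a module by projectable fields orthogonal to $\operatorname{Im}R$.

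For the final equality $hN_R=hN_{\mathcal{R}}$ I would argue by double inclusion. The inclusion $hN_R\subseteq hN_{\mathcal{R}}$ follows from the explicit formula $\mathcal{R}(X,Y)Z=J[Z,R(X,Y)]-[JZ,R(X,Y)]+R([JZ,X],Y)+R(X,[JZ,Y])$: if $hX\in hN_R$ then every term on the right vanishes when $X$ is the nullity argument, using that $hN_R$ is a module (so $R([JZ,hX],Y)$ and $R(hX,[JZ,Y])$ both vanish) and that $R(hX,\cdot)=0$ kills the first two terms. For the reverse inclusion I would use $\mathcal{R}(X,Y)S=-R(X,Y)$: if $hX\in hN_{\mathcal{R}}$ then $R(hX,Y)=-\mathcal{R}(hX,Y)S=0$, so $hX\in hN_R$.

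The main obstacle I anticipate is the orthogonality step: making rigorous the passage from "$hX\in hN_R$ kills the curvature operator $\mathcal{R}$" to the $g$-orthogonality statement requires the correct symmetry property of $\mathcal{R}$ under $g$, and one must be careful that $\mathcal{R}$ is defined only on horizontal arguments with values read through $J$, so the classical "Bianchi/metric symmetry" identities have to be re-derived in this framework from $Dg=0$, $DJ=0$ and $D\Gamma=0$ rather than quoted. The rank-constancy needed to apply Proposition~\ref{P3.3} is a secondary technical point, handled by restricting to the open dense regular set and noting the conclusion is local in nature.
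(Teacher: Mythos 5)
The central claim here --- that membership of $hX$ in $hN_R$ is tied to $g$-orthogonality of $JX$ to $\mathrm{Im}\,R$ --- is exactly the step your plan does not carry out. You propose to derive it from a pair symmetry of $\mathcal{R}$ under $g$ ``exactly as in the classical Levi-Civita case'', but that interchange symmetry is the one classical curvature identity that does not transfer to the Cartan connection of a spray for free: $Dg=0$ only yields skew-symmetry in one pair of slots, and the first Bianchi identity for the $hh$-curvature $\mathcal{R}$ acquires extra terms in this setting, so the Levi-Civita derivation of $g(\mathcal{R}(X,Y)Z,\cdot)=g(\mathcal{R}(Z,\cdot)X,Y)$ breaks down. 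You yourself flag this as ``the main obstacle'' and leave it unresolved, which means the heart of the proposition is missing. The paper takes a different route that bypasses the symmetry question: it splits on whether $R^\circ=i_SR$ vanishes (if so, $R=0$ and $hN_R=\mathrm{Im}\,h$), and otherwise invokes relation (4.2) of \cite{ANH}, which characterizes $JX\perp \mathrm{Im}\,R$ by $\mathcal{R}(S,X)Y=0$ for all $Y$; substituting the expression of $\mathcal{R}$ in terms of $R$ then gives $R(X,Y)=R^\circ([JY,X])$ for all $Y$, and semi-basicness of $R$ forces $X=S$ or $X\in hN_R$ with projectable generators, from which $hN_R=hN_{\mathcal{R}}$ also follows. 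A self-contained version of your argument would have to prove either that orthogonality criterion or the symmetry identity; neither is free.

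Two further concrete problems. First, your reduction step is incoherent: you decompose an element of $hN_R$ into its projection onto the projectable nullity fields plus a remainder and claim the remainder lies in $\mathrm{Im}\,R\cap hN_R=\{0\}$; but $\mathrm{Im}\,R$ is vertical while $hN_R$ is horizontal, so that intersection is vacuously zero and carries no information --- the remainder of an orthogonal decomposition inside $hN_R$ stays in $hN_R$, not in $\mathrm{Im}\,R$. Second, in the inclusion $hN_R\subseteq hN_{\mathcal{R}}$, the term $R([JZ,hX],Y)$ does not vanish ``because $hN_R$ is a module'': a $C^\infty(TM)$-module structure gives no control over $h[JZ,hX]$. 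In coordinates, the nullity condition $X^iR^k_{ij}=0$ with $R^k_{ij}=y^lR^k_{l,ij}(x)$ gives, after differentiating in $y^a$, the identity $\frac{\partial X^i}{\partial y^a}R^k_{ij}=-X^iR^k_{a,ij}$, which is nonzero in general and vanishes precisely when $X$ can be taken projectable (so that $h[JZ,X]=0$). That inclusion therefore presupposes the projectable generation you are in the middle of proving, and the logical order of your steps must be rearranged accordingly. The reverse inclusion via $\mathcal{R}(X,Y)S=-R(X,Y)$ is fine.
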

\begin{proof}
If $R^\circ=i_SR$ is zero, then the curvature $R$ is zero; in this case, the horizontal space $Im h$ is the horizontal nullity space of $R$, isomorphic to $\chi(U)$, $U$ being a open set of $M$ \cite{ANO1}.\\
In what follows, we assume that $R^{\circ}\neq0$. According to relation (4.2) of \cite{ANH}, $JX\perp Im R\Longleftrightarrow\mathcal{R}(S,X)Y=0$ $\forall Y\in\chi(TM)$. We obtain $R(X,Y)=R^{\circ}([JY,X])$ $\forall Y\in \chi(TM)$. As $R$ is a semi-basic vector $2-$form, the above relation is only possible if $X=S$ or if $X\in hN_R$, then $X$ is generated as a module by projectable vector fields belonging to $hN_R$. We get $hN_R=hN_\mathcal{R}$.
\end{proof}
\begin{theorem}\label{T4.4}
Let $\Gamma=[J,S]$ be a linear connection. The connection $\Gamma$ comes from a energy function if and only if 
\begin{enumerate}
	\item[$(1)$] there is an energy function $E_0$ such that $d_RE_0=0$; 
	\item[$(2)$] the scalar $1-$form $d_vE_0$ is completely integrable.
\end{enumerate}
Then, there exist a constant $\varphi(x)$ on the bundle such that $e^{\varphi(x)}E_0$ is the energy function of $\Gamma$.
\end{theorem}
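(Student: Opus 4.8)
The plan is to prove the two implications separately, the forward direction being essentially a restatement of Propositions \ref{P4.1} and \ref{P4.2}, and the substantive content lying in the converse.

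\medskip

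\noindent\textbf{Necessity.}
First I would assume $\Gamma=[J,S]$ comes from an energy function $E$, i.e.\ $i_Sdd_JE=-dE$. By Proposition \ref{P4.1} this is equivalent to $d_hE=0$. Since $R=\tfrac12[h,h]$ is built from $h$ and $d_h\circ d_h$ is, up to a sign, $d_R$ (the identity $d_L^2=d_{N_L}/\,$ for the Nijenhuis tensor, or more directly the fact that $i_R d=d_h d_h - $ corrections used in the proof of Proposition \ref{P4.2}), $d_hE=0$ forces $d_RE=0$. So we may take $E_0=E$ and condition $(1)$ holds. Condition $(2)$, complete integrability of $d_vE_0$, is then exactly Proposition \ref{P4.2} applied to this connection. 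This direction is therefore immediate once the two quoted propositions are invoked.

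\medskip

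\noindent\textbf{Sufficiency.}
Now suppose $(1)$ and $(2)$ hold: there is an energy function $E_0$ with $d_RE_0=0$ and $d_vE_0$ completely integrable. I want to produce a function $\varphi$ that is constant along the fibres (so $\varphi=\varphi(x)$, i.e.\ $d_J\varphi=0$, equivalently $L_{JY}\varphi=0$ for all $Y$) such that $E:=e^{\varphi}E_0$ satisfies $d_hE=0$; by Proposition \ref{P4.1} this $E$ is then an energy function for $\Gamma$. Compute
\begin{equation*}
d_hE=e^{\varphi}\bigl(d_h\varphi\cdot E_0+d_hE_0\bigr),
\end{equation*}
so the requirement becomes $d_hE_0=-E_0\,d_h\varphi$, i.e.\ $d_h(\log E_0)=-d_h\varphi=d_h(-\varphi)$ wherever $E_0\neq 0$. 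Thus the problem is to show that the horizontal $1$-form $\alpha:=d_h(\log E_0)$ is $d_h$-exact with a primitive depending only on the base coordinates. The decomposition $[hX,hY]=h[hX,hY]+R(X,Y)$ used in Proposition \ref{P4.2}, together with $d_RE_0=0$, shows that $\alpha$ is $d_h$-closed on the kernel directions coming from the curvature; complete integrability of $d_vE_0$ is what guarantees that the leaves of the associated foliation exist and that $\log E_0$ is (fibrewise) constant on them, which is precisely what lets us descend the primitive to a function of $x$ alone. Concretely, in the natural coordinates $E_0=\tfrac12 g_{ij}(x)y^iy^j$ is already a function for which $\log E_0$ decomposes; the homogeneity ($[C,E_0]=2E_0$, inherited because $E_0$ is an energy function) combined with $d_RE_0=0$ pins down the $y$-dependence of $d_hE_0$, and one reads off $\varphi$ from the "conformal factor'' between the $g_{ij}$ realizing $E_0$ and the metric compatible with $\Gamma$.

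\medskip

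\noindent\textbf{Main obstacle.}
The hard step is the sufficiency: extracting from the two integrability hypotheses a primitive $\varphi$ that is \emph{basic} (a function of $x$ only) rather than merely a local primitive on $TM$. The $d_R$-closedness gives $d_h$-closedness of $\alpha$ only modulo the curvature distribution, and one must use the complete integrability of $d_vE_0$ — i.e.\ the foliation whose leaves carry the "angular'' directions — to kill the fibre dependence and to glue local primitives into a global $\varphi(x)$ on the paracompact base. I expect this to require the homogeneity of $E_0$ in an essential way (to control behaviour near the zero section, where $E_0$ is only $\mathcal C^2$) and a partition-of-unity argument for the global existence of $\varphi$; the local computation, by contrast, is the routine coordinate verification sketched above.
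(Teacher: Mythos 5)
Your necessity direction matches the paper's (it is just Propositions \ref{P4.1} and \ref{P4.2}), but the sufficiency direction has a genuine gap: you correctly reduce the problem to solving $d\varphi=-\frac{1}{E_0}d_hE_0$, yet you never verify the integrability (closedness) of the right-hand side, which is the entire content of the converse. The paper's mechanism is concrete: complete integrability of $d_vE_0$ gives, by Frobenius, $dd_vE_0\wedge d_vE_0=0$, hence $dd_vE_0=\theta\wedge d_vE_0$ for some $1$-form $\theta$; contracting with the Liouville field $C$ and using homogeneity ($i_Cd_vE_0=L_CE_0=2E_0$) identifies $\theta=\frac{dE_0}{E_0}$, so $dd_vE_0=\frac{dE_0}{E_0}\wedge d_vE_0$; finally $d_hE_0=dE_0-d_vE_0$ and $d^2=0$ turn this into $dd_hE_0=\frac{dE_0}{E_0}\wedge d_hE_0$, which is exactly $d\bigl(-\frac{1}{E_0}d_hE_0\bigr)=0$. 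Your sketch replaces this computation with an appeal to ``the leaves of the associated foliation'' and to ``the conformal factor between the $g_{ij}$ realizing $E_0$ and the metric compatible with $\Gamma$''; the latter is circular, since the existence of a metric compatible with $\Gamma$ is precisely what is being proved.

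You also misidentify the main obstacle. Once closedness of $\omega=-\frac{1}{E_0}d_hE_0$ is established, the basicness of the primitive is automatic: $\omega$ is semi-basic (it kills vertical vectors, since $d_hE_0$ does), so any local primitive $\varphi$ satisfies $\frac{\partial\varphi}{\partial y^i}=\omega(\frac{\partial}{\partial y^i})=0$ and is a function of $x$ alone; no descent along foliation leaves and no partition-of-unity argument is needed at this stage. The hard step is not making $\varphi$ basic but proving $d\omega=0$, and that is where the hypothesis $(2)$ together with the $i_C$-contraction and the degree-two homogeneity of $E_0$ must actually be used.
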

\begin{proof}
Both conditions are necessary according to the Proposition \ref{P4.1} and \ref{P4.2}. \\Conversely, let $E_0$ be an energy function such that $d_RE_0$. We will show that, there exist a constant $\varphi$ function on the bundle such that $d_h(e^\varphi E_0)=0$.\\
The equation is equivalent to
\begin{equation*}
d\varphi=-\frac{1}{E_0}d_hE_0.
\end{equation*}
The condition of integrability of such an equation is
\begin{eqnarray*}
d(\frac{1}{E_0})\wedge d_hE_0+\frac{1}{E_0}dd_h E_0=0,
\end{eqnarray*}
namely
\begin{equation*}
dd_hE_0=\frac{dE_0}{E_0}\wedge d_hE_0.
\end{equation*}
As $d_v E_0$ is completely integrable, we have, according to Frobenius theorem,
\begin{equation*}
dd_vE_0\wedge d_vE_0=0
\end{equation*}
Applying the inner product $i_C$ to the above equality, we get
\begin{equation*}
dd_vE_0=\frac{dE_0}{E_0}\wedge d_vE_0,
\end{equation*}
that is to say
\begin{equation*}
dd_hE_0=\frac{dE_0}{E_0}\wedge d_hE_0.
\end{equation*}
This is the condition of integrability sought.\\
For more information see \cite{ANH}.
\end{proof}
\section{Lie algebra defined by spray}
Let $A_S=\{X\in\chi(TM)\ \text{such that }[X,S]=0 \}$. By developing the calculation $[X,S]=0$, we note that the projectable elements of $A_S$ are, on an open set $U$ of $M$, of the form:
\begin{equation*}
X=X^i(x)\frac{\partial}{\partial x^i}+y^j\frac{\partial X^i}{\partial x^j}\frac{\partial}{\partial y^i}.
\end{equation*}
Denoting $\overline{\chi(M)}$ the complete lift of the vector fields $\chi(M)$ on $TM$, the projectable elements of $A_S$ are in $A_S\cap\overline{\chi(M)}$. The geodesic spray of a linear connection is defined locally by
\begin{equation*}
\ddot{x}^i=-\Gamma^i_{jk}\dot{x}^j\dot{x}^k.
\end{equation*} A result from \cite{LOO} shows that the dimension of the Lie algebra $\overline{A_S}$ is at most equal to $n^2+n$. If the dimension $\overline{A_S}$ equal to $n^2+n$, then $(M,S)$ is isomorphic to $(\mathbb{R}^n,Z_\lambda)$ for a unique $\lambda\in \mathbb{R}$, $Z_\lambda$ is given by the equations $\ddot{x}^i=\lambda \dot{x}^i$, $i=1,\dots, n$. This condition is equivalent to the nullity of the curvature $R$ of $\Gamma$ cf.\cite{ANO1}. We can see this property on example 5 of \cite{ANH1}. In the following, we are interested in the nature of the algebra $\overline{A_S}$. By associating the equality $[\overline{X},S]=0$ with the tangent structure $J$ using the Jacobi identity \cite{FN1}, we can write
\begin{equation*}
[[\overline{X},S],J]+[[S,J],\overline{X}]+[[J,\overline{X}],S]=0.
\end{equation*}
Taking into account  the hypothesis $[\overline{X},S]=0$ and a result of \cite{LEH}: $[J,\overline{X}]=0$, we find 
\begin{equation*}
[\overline{X},\Gamma]=0\ \text{with}\ \Gamma=[J,S].
\end{equation*}
We notice that $[C,J]=-J$ and $[C,\overline{X}]=0$, we then take $\Gamma=[J,S]$ with $[C,S]=S$.\\
The $1-$vector form $\Gamma$ is a linear connection without torsion in the sense of \cite{GRI}. 
\begin{proposition}[\cite{ANO2}]\label{P5.1}
The Lie algebra $\overline{A_S}$ coincides with $\overline{A_\Gamma}=A_\Gamma\cap\overline{\chi(M)}$.
\end{proposition}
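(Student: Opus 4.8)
The plan is to establish the two inclusions $\overline{A_S}\subseteq\overline{A_\Gamma}$ and $\overline{A_\Gamma}\subseteq\overline{A_S}$ separately, using the Jacobi identity for the Frölicher–Nijenhuis bracket as the main computational tool. The first inclusion has essentially already been sketched in the text preceding the statement: for $\overline X\in\overline{A_S}$ one writes the Jacobi identity
\begin{equation*}
[[\overline X,S],J]+[[S,J],\overline X]+[[J,\overline X],S]=0,
\end{equation*}
and one uses the hypothesis $[\overline X,S]=0$ together with the result of \cite{LEH} that $[J,\overline X]=0$ for any complete lift, to conclude $[\overline X,[J,S]]=[\overline X,\Gamma]=0$; since $\overline X$ is by construction in $\overline{\chi(M)}$, this gives $\overline X\in A_\Gamma\cap\overline{\chi(M)}=\overline{A_\Gamma}$. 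So the first half is a matter of writing this argument cleanly and recording that $[C,J]=-J$, $[C,\overline X]=0$ keep everything compatible with the homogeneity $[C,S]=S$.

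For the converse, I would start from $\overline X\in\overline{A_\Gamma}$, so $[\overline X,\Gamma]=0$ and $\overline X=\overline Y$ for some $Y\in\chi(M)$, whence again $[J,\overline X]=0$ by \cite{LEH}. The goal is to recover $[\overline X,S]=0$. Here I would recall the basic identity relating $\Gamma$, $J$ and $S$: since $\Gamma=[J,S]$ and $JS=C$, one has $\Gamma C=\Gamma JS$, and more usefully $S$ can be recovered from $\Gamma$ and $C$ — indeed $hS=S$ because $v(S)=\tfrac12(S-\Gamma S)=\tfrac12(S-[J,S]\text{-action on }S)$; concretely $S=\tfrac12(C+\Gamma)$ applied appropriately, or one uses that $S$ is the unique semispray with $hS=S$ for the horizontal projector $h=\tfrac12(I+\Gamma)$. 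Then $[\overline X,S]$ should be controlled by $[\overline X,\Gamma]=0$ and $[\overline X,C]=0$: applying $[\overline X,-]$ to the relation expressing $S$ through $\Gamma$ and $C$ kills everything. More carefully, from $[\overline X,\Gamma]=0$ one gets $[\overline X, hZ]=h[\overline X,Z]$ for all $Z$, and taking $Z$ with $hZ=S$ (for instance lifting via $h$) together with $[\overline X,C]=0$ and $JS=C$ should force $[\overline X,S]$ to be vertical and then zero; alternatively, $v[\overline X,S]$ is computed from $[\overline X,\Gamma]=0$ and the $J$-relation, and $h[\overline X,S]=0$ follows from $[\overline X,\Gamma]=0$ applied to the defining property of $S$.

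The main obstacle I anticipate is precisely this last step: the reconstruction of $S$ from $\Gamma$ is not completely canonical (any two sprays inducing the same $\Gamma$ differ by a term in the kernel, though for homogeneous sprays with $[C,S]=S$ the spray is determined), so I must be careful to use the homogeneity condition $[C,S]=S$ — which is available since the text fixes $\Gamma=[J,S]$ with $[C,S]=S$ — to pin $S$ down uniquely among semisprays compatible with $\Gamma$, and only then does $[\overline X,\Gamma]=0$ and $[\overline X,C]=0$ propagate to $[\overline X,S]=0$. The cleanest route is probably: (i) show $J[\overline X,S]=[\overline X,JS]-[J[\overline X,S]]=[\overline X,C]=0$ using $[J,\overline X]=0$, so $[\overline X,S]$ is a vertical vector field; (ii) show $h[\overline X,S]=0$ using $[\overline X,h]=0$ (equivalent to $[\overline X,\Gamma]=0$) applied to any $Z$ with $hZ=S$; (iii) a vertical vector field annihilated by nothing extra need not vanish, so instead combine (i) with $[C,[\overline X,S]]=[\overline X,[C,S]]=[\overline X,S]$ (Jacobi plus $[C,\overline X]=0$) to see $[\overline X,S]$ is homogeneous of degree $1$ and vertical, hence of the form $f^i(x,y)\partial/\partial y^i$ with the $f^i$ homogeneous of degree one in $y$ while being a projectable-type expression, and conclude it is zero. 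I would then cite proposition 5 of \cite{ANO2} for the routine verifications and present only the Jacobi-identity skeleton here.
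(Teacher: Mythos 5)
Your proposal is essentially sound, and it is worth noting that the paper itself offers no argument here beyond citing proposition 9 of \cite{ANO2}; your first inclusion $\overline{A_S}\subseteq\overline{A_\Gamma}$ is verbatim the Jacobi-identity computation the paper carries out in the text immediately preceding the statement, so that half is exactly the paper's route. For the converse, you have assembled all the right ingredients but then talk yourself out of the clean conclusion. From $[C,S]=S$ one gets
\begin{equation*}
\Gamma S=[J,S]S=[JS,S]+J[S,S]=[C,S]=S,\qquad\text{hence } hS=S .
\end{equation*}
Your step (i) is correct: $J[\overline{X},S]=[\overline{X},JS]-[\overline{X},J]S=[\overline{X},C]=0$, so $[\overline{X},S]$ is vertical and therefore $h[\overline{X},S]=0$. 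But step (ii) should not aim at proving $h[\overline{X},S]=0$ (that is already automatic from (i)); what $[\overline{X},h]=0$ actually gives, applied to $Z=S$ with $hS=S$, is $[\overline{X},S]=[\overline{X},hS]=h[\overline{X},S]$, i.e.\ $[\overline{X},S]$ is \emph{horizontal}. Being both horizontal and vertical, it vanishes, and the proof is complete. Your step (iii) is therefore unnecessary, and as stated it would not close the argument on its own: a vertical vector field that is homogeneous of degree one need not vanish, so the homogeneity detour proves nothing by itself. Also, there is no genuine indeterminacy to worry about in ``reconstructing $S$ from $\Gamma$'': you never need to recover $S$; you only need the identity $hS=S$ above, which follows directly from the homogeneity hypothesis $[C,S]=S$ that the paper fixes. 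With that correction your argument is a complete, self-contained proof of both inclusions, which is more than the paper provides.
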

\begin{proof}
See proposition 9 of \cite{ANO2}.
\end{proof} 
\begin{proposition}\cite{RRA1}\label{P5.2}
Let $H^\circ$ denote the set of projectable horizontal vector fields and $A_\Gamma\cap H^\circ=A_\Gamma^h$, then we have $A_\Gamma^h=N_R\cap H^\circ$ and $A_\Gamma^h$ is an ideal of $A_\Gamma$. 
\end{proposition}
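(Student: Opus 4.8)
The plan is to work entirely with the projectors $h$ and $v$ and the curvature identity $R(hX,Y)=v[hX,hY]$ established in the proof of Proposition \ref{P3.1}, together with the characterization $A_\Gamma=A_h=A_v$ already noted there. First I would unwind the definitions: an element of $A_\Gamma^h$ is a vector field $X$ that is simultaneously horizontal ($hX=X$), projectable, and satisfies $[X,\Gamma]=0$, equivalently $[X,hY]=h[X,Y]$ for all $Y$. To show $A_\Gamma^h\subseteq N_R\cap H^\circ$, I would take such an $X$ and compute $R(X,Y)=R(hX,Y)=v[hX,hY]=v[X,hY]$; using $[X,hY]=h[X,Y]$ and $v\circ h=0$ gives $R(X,Y)=0$ for all $Y$, so $X\in N_R$, and $X\in H^\circ$ by hypothesis. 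Conversely, for $X\in N_R\cap H^\circ$ I must upgrade ``$X$ annihilates $R$ and is horizontal projectable'' to ``$[X,\Gamma]=0$''; this is the step where I expect to invoke Proposition \ref{P3.2} (via Proposition \ref{P3.3}, since a horizontal nullity vector field that is projectable lets one pass between $[hX,J]=0$ and $[JX,h]=0$) to control the vertical derivatives, and then combine $v[X,hY]=R(hX,Y)=0$ with the vanishing of the relevant $J$-brackets to conclude $[X,hY]=h[X,Y]$, i.e. $X\in A_h=A_\Gamma$.

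For the ideal property, I would take $X\in A_\Gamma$ arbitrary and $Y\in A_\Gamma^h=N_R\cap H^\circ$ and show $[X,Y]\in A_\Gamma^h$. That $[X,Y]\in A_\Gamma$ is automatic since $A_\Gamma$ is a Lie algebra (it is the kernel of $Z\mapsto[Z,\Gamma]$, hence closed under bracket by the Jacobi identity for the Frölicher–Nijenhuis bracket). It remains to check the two extra conditions: that $[X,Y]$ is horizontal and that it lies in $N_R$. Horizontality follows because $X\in A_h$ gives $h[X,Y]=[X,hY]=[X,Y]$ using $hY=Y$. Projectability of $[X,Y]$ follows from Proposition \ref{P3.1}, since $[X,Y]\in A_\Gamma$. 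For membership in $N_R$, I would use the Jacobi identity on the bracket $[[X,Y],Z']$ together with $R(hU,W)=v[hU,hW]$: writing everything in terms of $h$-brackets and using $Y\in N_R$ (so $v[hY,hW]=0$) and $X\in A_v$ (so $[X,vW]=v[X,W]$), the mixed terms collapse and one gets $R([X,Y],Z')=0$ for all $Z'$.

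The main obstacle is the converse inclusion $N_R\cap H^\circ\subseteq A_\Gamma^h$: vanishing of $R$ on $X$ only controls the \emph{vertical} part $v[X,hY]$, and one needs an independent argument to control the horizontal part and, crucially, the vertical derivatives of $X$ hidden in $[X,hY]-h[X,Y]$. This is precisely why the hypothesis of Proposition \ref{P5.2} is stated for projectable vector fields and why Propositions \ref{P3.2} and \ref{P3.3} are placed just before: the projectability plus constancy of rank of $hN_R$ furnishes, locally, a basis adapted so that $[hX,J]=0$, and this Lie-derivative identity with respect to $J$ is exactly the extra input needed to kill the vertical derivative terms and close the argument. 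I would therefore organize the converse as: reduce to a local adapted basis of $hN_R$ via Proposition \ref{P3.3}, verify the claim on basis elements using Proposition \ref{P3.2}, and extend by module-linearity, checking that multiplication by functions constant on the bundle (or, more carefully, basic functions) does not disturb the bracket relations. For the precise bookkeeping I would refer to \cite{RRA1}.
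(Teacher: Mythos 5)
Your forward inclusion $A_\Gamma^h\subseteq N_R\cap H^\circ$ and your treatment of the ideal property are fine and essentially coincide with the paper's argument (the extra verification that $[X,Y]\in N_R$ is redundant once the equality $A_\Gamma^h=N_R\cap H^\circ$ is in hand, since $[X,Y]\in A_\Gamma\cap H^\circ$ already suffices). The problem is the converse inclusion, which you yourself identify as the main obstacle and then resolve by a detour through Propositions \ref{P3.2} and \ref{P3.3} plus a deferral to \cite{RRA1}. That route does not work: Proposition \ref{P3.3} carries hypotheses (constant rank of $hN_R$, generation as a module by projectable fields) that are not assumed in Proposition \ref{P5.2}; its conclusion concerns $[hX,J]=0$ and $[JX,h]=0$, neither of which is the condition $[X,h]=0$ you actually need; and ``extending by module-linearity'' is illegitimate because $X\mapsto[X,h]$ is a first-order differential operator in $X$, not $C^\infty(TM)$-linear, so verifying the condition on a module basis of $hN_R$ proves nothing about a general element.

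The paper's converse is a short direct computation that needs none of this. Expanding the Fr\"olicher--Nijenhuis bracket, for $X$ horizontal ($hX=X$) one has $R(X,Y)=[X,hY]-h[X,hY]=v[X,hY]$, so $X\in N_R$ gives $[X,hY]=h[X,hY]$ for all $Y$. Hence
\begin{equation*}
[X,h]Y=[X,hY]-h[X,Y]=h[X,hY]-h[X,Y]=-h[X,vY],
\end{equation*}
and the last term vanishes because $X$ is projectable and the bracket of a projectable field with a vertical field is vertical --- this is exactly where the hypothesis $X\in H^\circ$ (projectability) enters, replacing your appeal to Propositions \ref{P3.2}--\ref{P3.3}. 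So $[X,h]=0$, i.e.\ $X\in A_h=A_\Gamma$, and the converse is done. Your plan as written leaves this step genuinely unproved.
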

\begin{proof}
The curvature $R$ is written, for all $X,Y\in\chi(TM)$\begin{equation*}
R(X,Y)=v[hX,hY].
\end{equation*}
If $hX\in A_\Gamma^h$, we have $R(X,Y)=v\circ h[hX,Y]=0$,$\forall Y\in\chi(TM)$. That means $X\in N_R$.\\
The curvature $R$ is written, for all $X,Y\in\chi(TM)$
 \begin{equation*}
 R(X,Y)=[hX,hY]+h^2[X,Y]-h[hX,Y]-h[X,hY].
 \end{equation*}
If $X\in N_R\cap H^\circ$, given $hX=X$ and $R(X,Y)=0$ for all $Y\in\chi(TM)$, we find $[X,hY]=h[X,hY]$.\\
If $Y$ is a vertical vector field, the above equality still holds, because it is zero.\\
For the ideal $A_\Gamma^h$, it is immediate from the expression of $A_\Gamma$.
\end{proof}
\begin{proposition}\label{P5.3}
Let $\overline{A_\Gamma}^h=A_\Gamma^h\cap\overline{A_\Gamma}$, the set of the horizontal vector fields $\overline{A_\Gamma}^h$ form a commutative ideal of $\overline{A_\Gamma}$. The dimension of $\overline{A_\Gamma}^h$ corresponds to the dimension of $A_\Gamma^h$ if the rank of $A_\Gamma^h$ is constant.
\end{proposition}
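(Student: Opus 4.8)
We outline the argument. The plan is to handle the three assertions in order, with everything resting on an explicit description of the horizontal complete lifts.

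First, for the ideal property I would invoke Proposition~\ref{P5.2}, which already gives that $A_\Gamma^h=N_R\cap H^\circ$ is an ideal of $A_\Gamma$, together with the elementary fact that the bracket of two complete lifts is again a complete lift, $[\overline{X_0},\overline{Y_0}]=\overline{[X_0,Y_0]}$. Indeed, if $Z\in\overline{A_\Gamma}^h$ and $W\in\overline{A_\Gamma}$, then $W\in A_\Gamma$ and $Z\in A_\Gamma^h$ force $[Z,W]\in A_\Gamma^h$, while $Z,W\in\overline{\chi(M)}$ force $[Z,W]\in\overline{\chi(M)}$; since $A_\Gamma$ is a Lie algebra, $[Z,W]\in A_\Gamma\cap\overline{\chi(M)}=\overline{A_\Gamma}$, and combining the two memberships gives $[Z,W]\in A_\Gamma^h\cap\overline{A_\Gamma}=\overline{A_\Gamma}^h$.

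The key preliminary for the other two points is the identification of $\overline{A_\Gamma}^h$ as the set of complete lifts of parallel vector fields on $M$. A complete lift $\overline{X_0}=X^i(x)\tfrac{\partial}{\partial x^i}+y^j\tfrac{\partial X^i}{\partial x^j}\tfrac{\partial}{\partial y^i}$ is horizontal iff it coincides with the horizontal lift $X^i h(\tfrac{\partial}{\partial x^i})$; using $\Gamma^i_k=\Gamma^i_{kl}y^l$ and comparing the $\tfrac{\partial}{\partial y}$-components, this holds exactly when $\tfrac{\partial X^i}{\partial x^j}+\Gamma^i_{kj}X^k=0$, i.e.\ when $X_0$ is parallel for the induced connection on $M$ — equivalently (one checks $[\overline{X_0},J]$ has $\tfrac{\partial}{\partial y}$-components $(\nabla_j X^i)$) when $[\overline{X_0},J]=0$, the condition of Proposition~\ref{P3.2}. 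Conversely a horizontal projectable field lying in $A_\Gamma$ lies in $A_\Gamma\cap H^\circ=A_\Gamma^h$, so $\overline{A_\Gamma}^h$ consists precisely of the $\overline{X_0}$ with $X_0$ parallel and $X_0(x)$ in the nullity space of the curvature at each $x$. Commutativity is then immediate: for parallel $X_0,Y_0$ one has $[X_0,Y_0]^i=X^j\partial_jY^i-Y^j\partial_jX^i=-\Gamma^i_{kj}X^jY^k+\Gamma^i_{kj}X^kY^j=0$ because $\Gamma^k_{ij}=\tfrac{\partial^2G^k}{\partial y^i\partial y^j}$ is symmetric in $i,j$; hence $[\overline{X_0},\overline{Y_0}]=\overline{[X_0,Y_0]}=0$.

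For the dimension statement, assume the module $A_\Gamma^h$ has constant rank $r$. Since $R$ is semi-basic with $R^k_{ij}=y^lR^k_{l,ij}(x)$, $A_\Gamma^h=N_R\cap H^\circ$ is the module of sections of the rank-$r$ subbundle $\mathcal N\subset TM$ with fibre $\{v:R^k_{l,ij}(x)v^i=0\}$. By Proposition~\ref{P3.3} this module has, on a suitable open set, a frame of vector fields satisfying Proposition~\ref{P3.2}, i.e.\ complete lifts $\overline{E_1},\dots,\overline{E_r}$ of parallel vector fields, which are $\mathbb R$-independent elements of $\overline{A_\Gamma}^h$, so $\dim\overline{A_\Gamma}^h\ge r$. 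Conversely any $Z\in\overline{A_\Gamma}^h$ is $\overline{Z_0}$ with $Z_0$ parallel and valued in $\mathcal N$; writing $Z_0(p)=\sum c^iE_i(p)$ with $c^i\in\mathbb R$, the parallel field $Z_0-\sum c^iE_i$ vanishes at $p$, hence vanishes on the (connected) domain, so $Z=\sum c^i\overline{E_i}$ and $\dim\overline{A_\Gamma}^h\le r$. Thus $\dim\overline{A_\Gamma}^h=r=\operatorname{rank}A_\Gamma^h$. The main obstacle is exactly this last step: one needs the constant-rank nullity module to be trivialisable by parallel (complete) lifts, which is the substance of Proposition~\ref{P3.3} and of its hypothesis that $hN_R$ be generated by projectable vector fields, and one must keep in mind that the dimension equality is naturally a local assertion (globally it is bounded by $r$, with equality controlled by the holonomy acting on $\mathcal N$). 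The first two assertions, by contrast, are formal once horizontal complete lifts are recognised as complete lifts of parallel fields.
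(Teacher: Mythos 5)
Your proof is correct, but it reaches the commutativity by a genuinely different route from the paper. The paper argues structurally: the vertical part of $[\overline{X},\overline{Y}]$ is $v[h\overline{X},h\overline{Y}]=R(\overline{X},\overline{Y})=0$ because $\overline{A_\Gamma}^h\subset N_R$ (Proposition~\ref{P5.2}), and the horizontal part is killed by showing $J[\overline{X},\overline{Y}]=0$ via Proposition~\ref{P3.2} and the identity $[J,\overline{X}]=0$ for complete lifts; the dimension claim is then dispatched in one line by citing Proposition~\ref{P3.3}. You instead first identify $\overline{A_\Gamma}^h$ with the complete lifts of parallel vector fields (the computation $\partial X^i/\partial x^j+\Gamma^i_{kj}X^k=0$ is exactly the condition for the complete and horizontal lifts to coincide, and matches the condition $[hX,J]=0$ of Proposition~\ref{P3.2}), after which commutativity is the elementary fact that two parallel fields of a torsion-free connection commute, and the dimension equality follows because a parallel field on a connected set is determined by its value at one point. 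Your version is more explicit and actually supplies the argument the paper only gestures at for the dimension statement; its cost is that it is coordinate-bound and local, as you note. Two small points to keep visible: (i) both arguments ultimately lean on Proposition~\ref{P3.3}, whose hypothesis that $hN_R$ be generated as a module by projectable vector fields is not restated in Proposition~\ref{P5.3}, so the dimension equality really carries that hypothesis (and you should be careful that the constant-rank module in Proposition~\ref{P5.3} is $A_\Gamma^h=N_R\cap H^\circ$, whose generated module may a priori be smaller than $hN_R$); (ii) in the lower bound $\dim\overline{A_\Gamma}^h\geq r$ you implicitly use the other half of Proposition~\ref{P5.2}, namely that a projectable horizontal field in $N_R$ automatically lies in $A_\Gamma$, to conclude that the parallel frame fields $\overline{E_i}$ belong to $\overline{A_\Gamma}^h$ and not merely to $hN_R$; it is worth saying so explicitly.
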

\begin{proof}
By Proposition\ref{P5.2}, $A_\Gamma^h$ is an ideal of $A_\Gamma$, so $\overline{A_\Gamma}^h=A_\Gamma^h\cap\overline{\chi(M)}$ is an ideal of $\overline{A_\Gamma}=A_\Gamma\cap\overline{\chi(M)}$, moreover $v[\overline{X},\overline{Y}]=0$, for all $\overline{X},\overline{Y}\in\overline{A_\Gamma}^h$. Propositions \ref{P3.2} and 2 of \cite{ANO2} give $J[\overline{X},\overline{Y}]=0$, for all $\overline{X},\overline{Y}\in\overline{A_\Gamma}^h$, noting that $[J,\Gamma]=0$. The horizontal and vertical parts of $[\overline{X},\overline{Y}]$ are therefore zero, that is, $[\overline{X},\overline{Y}]=0$.\\
The existence of such an element of $\overline{A_\Gamma}^h$ is given by the proposition\ref{P3.3}.
\end{proof}
\section{Case of constant values of $\overline{A_\Gamma}$}
If we expand the equation $[\overline{X},S]=0$ with $S=[C,S]$, we get
\begin{equation*}
X^l\frac{\partial \Gamma^k_{ij}}{\partial x^l}+\frac{\partial X^l}{\partial x^j}\Gamma^k_{il}+\frac{\partial X^l}{\partial x^i}\Gamma^k_{lj}+\frac{\partial^2 X^k}{\partial x^i\partial x^j}-\frac{\partial X^k}{\partial x^l}\Gamma^l_{ij}=0.
\end{equation*}
we note that the constant  values of $\overline{A_\Gamma}$ verify 
\begin{equation}\label{E6.1}
X^l\frac{\partial \Gamma^k_{ij}}{\partial x^l}=0
\end{equation}
\begin{proposition}\label{P6.1}
Le $\Gamma$ be a linear connection without torsion. If the constant vector fields of $\overline{A_\Gamma}$ form a commutative ideal of $\overline{A_\Gamma}$, they are at most the constant elements of an ideal $I$ of affine vector fields containing these constants such that for all $\overline{X}\in\overline{A_\Gamma}$, $\overline{X}$ is written $\overline{X}=\overline{X_1}+\overline{X_2}$ with $\overline{X_2}\in I$ and that $[\overline{X_1},\overline{X_2}]=0$, the derived ideal of $\overline{A_\Gamma}$ never coincides with $\overline{A_\Gamma}$.
\end{proposition}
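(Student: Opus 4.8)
The plan is to read the hypothesis as: the space $V$ of constant vector fields lying in $\overline{A_\Gamma}$ (any two of which commute automatically, so ``commutative'' adds nothing) is a nonzero ideal of $\overline{A_\Gamma}$, and then to manufacture from it a nonzero linear functional on $\overline{A_\Gamma}$ that kills all brackets.

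First I would make the structure of $\overline{A_\Gamma}$ explicit. By \eqref{E6.1} every $\overline c\in V$ annihilates the coefficients $\Gamma^k_{ij}$, so after a linear change of the base coordinates we may assume $V=\operatorname{span}\{\partial/\partial x^1,\dots,\partial/\partial x^r\}$ and that each $\Gamma^k_{ij}$ depends only on the transverse variables $x^{r+1},\dots,x^n$. For $\overline X\in\overline{A_\Gamma}$ and $\overline c\in V$ one has $[\overline X,\overline c]=\overline{[X,c]}$ with $[X,c]^k=-c^\alpha\,\partial X^k/\partial x^\alpha$; imposing $[\overline X,\overline c]\in V$ for every $\overline c\in V$ forces $\partial^2 X^k/\partial x^i\partial x^\alpha=0$ for all $i$ and all $\alpha\le r$, and also $\partial X^k/\partial x^\alpha=0$ whenever $k>r$. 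Hence every component $X^k$ is an affine function of $x^1,\dots,x^r$ with constant linear part $A^k_\alpha:=\partial X^k/\partial x^\alpha$, vanishing for $k>r$; consequently $\overline X\mapsto(A^\alpha_\beta)_{\alpha,\beta\le r}$ is, up to sign, the homomorphism $\operatorname{ad}|_V\colon\overline{A_\Gamma}\to\operatorname{End}(V)$ giving the action of $\overline{A_\Gamma}$ on its ideal $V$.

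Next I would exhibit the ideal $I$: take $I$ to be the set of $\overline X\in\overline{A_\Gamma}$ whose components $X^1,\dots,X^n$ are affine in all of $x^1,\dots,x^n$, i.e. the complete lifts of the infinitesimal affine transformations that are affine in the chart. Then $V\subset I$, and $I$ consists of affine vector fields. That $I$ is a subalgebra is the standard fact that a bracket of affine fields is affine; that $I$ is an ideal of $\overline{A_\Gamma}$ follows from the $V$-invariance of $\Gamma$ together with the defining equations $X^l\partial_l\Gamma^k_{ij}+\partial_j X^l\,\Gamma^k_{il}+\partial_i X^l\,\Gamma^k_{lj}+\partial_i\partial_j X^k-\partial_l X^k\,\Gamma^l_{ij}=0$, which already absorb the transverse dependence of $\Gamma$, so that bracketing an affine element against an arbitrary member of $\overline{A_\Gamma}$ produces no non-affine term. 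Splitting each $X^k$ into its affine part and a remainder gives $\overline X=\overline{X_1}+\overline{X_2}$ with $\overline{X_2}\in I$, and a direct computation with the complete-lift bracket — again using that $\Gamma$ and the components of the elements of $\overline{A_\Gamma}$ are affine along the $V$-directions — yields $[\overline{X_1},\overline{X_2}]=0$, which is the asserted decomposition.

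Finally, for $[\overline{A_\Gamma},\overline{A_\Gamma}]\neq\overline{A_\Gamma}$ I would put $\varphi:=\operatorname{tr}\circ(\operatorname{ad}|_V)\colon\overline{A_\Gamma}\to\mathbb{R}$, i.e. $\varphi(\overline X)=-\sum_{\alpha\le r}\partial X^\alpha/\partial x^\alpha$. Since $\operatorname{ad}|_V$ is a Lie-algebra homomorphism and the trace of a commutator vanishes, $\varphi$ annihilates $[\overline{A_\Gamma},\overline{A_\Gamma}]$; if $\varphi\neq0$ then $[\overline{A_\Gamma},\overline{A_\Gamma}]\subseteq\ker\varphi\subsetneq\overline{A_\Gamma}$ and we are finished. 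If $\varphi\equiv0$ — the action on $V$ is then unimodular — I would use the decomposition of the preceding paragraph together with the transverse projection $\overline X\mapsto(X^{r+1},\dots,X^n)$, which is a Lie-algebra homomorphism of $\overline{A_\Gamma}$ into the algebra of infinitesimal affine transformations of a connection on the $(n-r)$-dimensional transverse space, and argue by induction on $\dim M$, the terminal case being the flat one ($R=0$), where by the bound $\dim\overline{A_S}\le n^2+n$ of \cite{LOO} one has $\overline{A_\Gamma}=\operatorname{End}(\mathbb{R}^n)\ltimes\mathbb{R}^n$ and $[\overline{A_\Gamma},\overline{A_\Gamma}]$ is the proper ideal of fields with traceless linear part. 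I expect this unimodular case to be the real obstacle of the argument: one must preclude that $\overline{A_\Gamma}$ be perfect of the shape $L\ltimes V$ with $L$ semisimple acting on the abelian ideal $V$ with no trivial quotient, and it is precisely here that one must exploit that $V$ is made of \emph{constant} fields — so that $\Gamma$ is literally translation-invariant in those directions and the stabiliser / transverse description above is available — rather than merely that $V$ is some abelian ideal.
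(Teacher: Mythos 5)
Your structural analysis follows essentially the same route as the paper's own proof (its ``1st case''): from \eqref{E6.1} and the requirement that $[\overline{c},\overline{X}]$ stay in the space $V$ of constants you deduce that the components of $\overline{X}$ in the transverse directions are independent of the $V$-variables while the components in the $V$-directions are affine in them with constant linear part, and this yields the decomposition $\overline{X}=\overline{X_1}+\overline{X_2}$ with $[\overline{X_1},\overline{X_2}]=0$; the paper's ``2nd case'' (all of $\overline{A_\Gamma}$ constant) is subsumed in your setup. One caution on the intermediate object: your $I$, defined as the elements of $\overline{A_\Gamma}$ whose components are affine in \emph{all} coordinates, is not obviously an ideal --- the bracket of an affine field with a non-affine symmetry contains the term $X^l\partial Y^k/\partial x^l$, which has no reason to be affine. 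The paper's $I$ is rather the set of $\overline{X_2}$-parts, i.e.\ fields supported in the $V$-directions with coefficients affine in the $V$-variables; that this is an ideal is immediate because, by the very decomposition you both establish, $\overline{A_\Gamma}$ sits inside a direct product of which these fields form one factor.

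The genuine gap is exactly where you locate it yourself: the final assertion that $[\overline{A_\Gamma},\overline{A_\Gamma}]\neq\overline{A_\Gamma}$. The functional $\varphi=\operatorname{tr}\circ(\operatorname{ad}|_V)$ settles the matter only when $\varphi\not\equiv 0$. When the action on $V$ is unimodular, nothing you write excludes that the projection of $\overline{A_\Gamma}$ onto the affine algebra of the $V$-directions be a perfect subalgebra containing all translations, such as $\mathfrak{sl}(m)\ltimes\mathbb{R}^m$, in which case no trace functional separates the derived ideal; your proposed induction on the dimension of the transverse space is only a sketch, its base case appeals to the classification of the flat situation, and you never show that the induction terminates in a configuration where $\varphi\neq 0$. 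Closing this case requires an additional input --- for instance, that the linear parts $a^r_s$ arising in $I$ must stabilise the tensor $\Gamma^k_{ij}$, so that a perfect, irreducibly acting linear part would force $\Gamma$ to be flat along $V$ and hence force the full $\mathfrak{gl}$ (restoring $\varphi\neq 0$) --- and no such argument appears in your text. To be fair, the paper itself dispatches this point in a single sentence (``In any case, the derived ideal of $\overline{A_\Gamma}$ never coincides with $\overline{A_\Gamma}$''), so your explicit flagging of the difficulty is more candid than the source; but as written your proposal proves the decomposition statement and not the non-perfectness statement, which is the part actually used in Theorem~\ref{T6.2}.
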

\begin{proof}
\begin{description}
	\item[1st case:] The functions $G^k$ do not depend on some coordinates in an open set $U$ of $M$. To simplify, quite to change the numbering order of the coordinates, the spray $S$ is such that $\frac{\partial G^k}{\partial x^{p+1}}=0$, $\dots$, $\frac{\partial G^k}{\partial x^{n}}=0$, $k\in\{1,\dots, n\}$ and $1\leq p\leq n-1$. Then, we have $\frac{\partial}{\partial x^{p+1}},\dots, \frac{\partial}{\partial x^{n}}\in\overline{A_\Gamma}(U)$.\\
	For any $\overline{X}\in \overline{A_\Gamma}(U)$, we can write
	\begin{eqnarray*}
\overline{X}&=& X^i\frac{\partial}{\partial x^i}+y^j\frac{\partial X^i}{\partial x^j}\frac{\partial}{\partial y^i}\\
&=& \sum_{l=1}^{p}(X^l\frac{\partial}{\partial x^l}+y^j\frac{\partial X^l}{\partial x^j}\frac{\partial}{\partial y^l})+\sum_{r=p+1}^{n}(X^r\frac{\partial}{\partial x^r}+y^j\frac{\partial X^r}{\partial x^j}\frac{\partial}{\partial y^r}),\ 1\leq j\leq n.
\end{eqnarray*}
For the Lie sub-algebra generated by $\{\frac{\partial}{\partial x^{p+1}},\dots, \frac{\partial}{\partial x^{n}}\}$ form an ideal of $\overline{A_\Gamma}(U)$, we must have $[\frac{\partial}{\partial x^{h}},\overline{X}]$ belong to this ideal for all $h$, $p+1\leq h\leq n$.\\
That implies $\frac{\partial X^l}{\partial x^{h}}=0$, for all $l$ such that $1\leq l\leq p$ and for all $h$ such that $p+1\leq h\leq n$.\\
We have $X^r=a_s^r x^s+b^r$, $p+1\leq r,s\leq n$; $a_s^r,b^r\in \mathbb{R}$.\\
Denoting 
\begin{eqnarray*}
\overline{X_1}&=&\sum_{l=1}^{p}(X^l\frac{\partial}{\partial x^l}+y^j\frac{\partial X^l}{\partial x^j}\frac{\partial}{\partial y^l}),\ 1\leq j\leq n\\
\overline{X_2}&=&\sum_{r=p+1}^{n}(a_s^r x^s+b^r)\frac{\partial}{\partial x^r}+a_s^r y^s\frac{\partial}{\partial y^r},\ p+1\leq s\leq n.
\end{eqnarray*}
An element $\overline{X}\in\overline{A_\Gamma}$, $\overline{X}$ is written $\overline{X}=\overline{X_1}+\overline{X_2}$ with $[\overline{X_1},\overline{X_2}]=0$.
\item[2nd case:] The elements of $\overline{A_\Gamma}$ are of the form $a^l\frac{\partial}{\partial x^l}$, $l\in\{1,\dots, p\}$. The decomposition of the elements of $\overline{A_\Gamma}$ amounts  to the same way. In any case, the derived ideal of $\overline{A_\Gamma}$ never coincides with $\overline{A_\Gamma}$.
\end{description}
\end{proof}
\begin{theorem}\label{T6.2}
The Lie algebra $\overline{A_\Gamma}$ is semi-simple if and only if the horizontal and projectable vector fields of the nullity space of the curvature $R$ is zero and the derived ideal of $\overline{A_\Gamma}$ coincides with $\overline{A_\Gamma}$.
\end{theorem}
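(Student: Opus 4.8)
The plan is to run the statement through the standard structure theory of finite-dimensional Lie algebras. By the bound $\dim\overline{A_S}\le n^{2}+n$ of \cite{LOO} the algebra $\overline{A_\Gamma}=\overline{A_S}$ is finite-dimensional, and it is semi-simple exactly when it has no nonzero commutative ideal, equivalently when its radical is zero. The first hypothesis amounts to the vanishing of $\overline{A_\Gamma}^{h}=A_\Gamma^{h}\cap\overline{A_\Gamma}$, the horizontal projectable elements of $\overline{A_\Gamma}$ lying in $N_R$ (recall $A_\Gamma^{h}=N_R\cap H^{\circ}$ by Proposition \ref{P5.2}), while the second is $[\overline{A_\Gamma},\overline{A_\Gamma}]=\overline{A_\Gamma}$. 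Necessity is then immediate: a semi-simple Lie algebra coincides with its derived ideal, giving the second condition, and it has no nonzero commutative ideal, so the commutative ideal $\overline{A_\Gamma}^{h}$ of Proposition \ref{P5.3} must vanish, giving the first.

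For sufficiency, assume $\overline{A_\Gamma}^{h}=0$ and $[\overline{A_\Gamma},\overline{A_\Gamma}]=\overline{A_\Gamma}$, and suppose for contradiction that $\overline{A_\Gamma}$ is not semi-simple. Then its radical is a nonzero solvable ideal, and the last nonzero term of the derived series of that radical is a nonzero commutative ideal $B\subset\overline{A_\Gamma}$. I would work in natural local coordinates on an open set $U$ of $M$, writing the projectable elements of $\overline{A_\Gamma}$ as $\overline{X}=X^{i}(x)\frac{\partial}{\partial x^{i}}+y^{j}\frac{\partial X^{i}}{\partial x^{j}}\frac{\partial}{\partial y^{i}}$, and would exploit simultaneously the ideal relation $[\overline{A_\Gamma},B]\subset B$, the commutativity $[B,B]=0$, the identities obtained by expanding $[\overline{X},S]=0$ (in particular relation (\ref{E6.1}) for the constant part), and the properties of $h$, $v$ and of the curvature $R(X,Y)=v[hX,hY]$ recalled in Section 3.

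The outcome I expect is a dichotomy for $B$. Either every element of $B$ is horizontal, so that $B\subset A_\Gamma^{h}\cap\overline{A_\Gamma}=\overline{A_\Gamma}^{h}$ by Propositions \ref{P5.2} and \ref{P5.3}, contradicting $\overline{A_\Gamma}^{h}=0$; or $B$ is not horizontal, in which case $B$, together with the affine structure of the projectable elements of $\overline{A_\Gamma}$, realizes the configuration of Proposition \ref{P6.1}: $B$ lies in an ideal $I$ of affine vector fields along which every $\overline{X}\in\overline{A_\Gamma}$ splits as $\overline{X}=\overline{X_{1}}+\overline{X_{2}}$ with $\overline{X_{2}}\in I$ and $[\overline{X_{1}},\overline{X_{2}}]=0$, whence by Proposition \ref{P6.1} the derived ideal of $\overline{A_\Gamma}$ is strictly contained in $\overline{A_\Gamma}$, contradicting the second hypothesis. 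Either branch is a contradiction, so $\overline{A_\Gamma}$ is semi-simple.

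The main obstacle is precisely establishing this dichotomy: one must show that an arbitrary nonzero commutative ideal of $\overline{A_\Gamma}$ is necessarily either contained in the horizontal nullity ideal $\overline{A_\Gamma}^{h}$ or of the affine/constant type handled by Proposition \ref{P6.1}, leaving no third possibility. This means pushing the local computation around (\ref{E6.1}), the behaviour of $h$, $v$, $R$ (Propositions \ref{P3.1} and \ref{P5.2}), and the description of affine vector fields commuting with $S$ far enough to close off all the remaining cases; once the dichotomy is in hand, everything else is a routine application of the structure theory of radicals in finite-dimensional Lie algebras.
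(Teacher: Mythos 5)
Your necessity argument matches the paper's exactly, and your overall plan for sufficiency (produce a nonzero commutative ideal from the radical and derive a contradiction from one of the two hypotheses) is also the paper's plan. But the step you yourself flag as ``the main obstacle'' --- showing that a nonzero commutative ideal must be either horizontal or of the constant/affine type of Proposition \ref{P6.1}, with no third possibility --- is precisely the substance of the proof, and you leave it unestablished. Announcing a dichotomy and then conceding that closing it off requires ``pushing the local computation far enough'' is not a proof of the dichotomy; as it stands the sufficiency direction is incomplete.

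The missing idea in the paper is short but essential. Since $[\overline{X},h]=0$ for every $\overline{X}\in\overline{A_\Gamma}$, the Jacobi identity for the Fr\"olicher--Nijenhuis bracket gives $[\overline{X},[h,h]]=0$, i.e.\ $[\overline{X},R]=0$, so each $\overline{X}$ acts as a derivation of the semi-basic $2$-form $R$:
\begin{equation*}
[\overline{X},R(Y,Z)]=R([\overline{X},Y],Z)+R(Y,[\overline{X},Z]),\quad\forall\,Y,Z\in\chi(TM).
\end{equation*}
For $\overline{X},\overline{Y}$ in a commutative ideal this collapses to $[\overline{X},R(\overline{Y},Z)]=R(\overline{Y},[\overline{X},Z])$ for all $Z$. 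The first hypothesis (vanishing of the horizontal projectable nullity space) makes $R$ non-degenerate on $\overline{\chi(M)}\times\chi(TM)$, and this constraint forces the commutative ideal to consist at most of constant vector fields, which is exactly the situation Proposition \ref{P6.1} rules out under the second hypothesis. You should supply this derivation-of-$R$ identity and the non-degeneracy argument explicitly; without them your case analysis has no engine.
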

\begin{proof}
If the Lie algebra $\overline{A_\Gamma}$ is semi-simple, any commutative ideal of $\overline{A_\Gamma}$ reduces to zero by definition. According to the proposition \ref{P5.3}, the horizontal and projectable vector fields of the nullity space of the curvature $R$ of $\Gamma$ is zero. The derived ideal of $\overline{A_\Gamma}$ coincides with $\overline{A_\Gamma}$ by a classical result.\\
Conversely, if $\overline{X}\in\overline{A_\Gamma}$, we have $[\overline{X},h]=0$. According to the Jacobi Identity cf.\cite{FN1} $[\overline{X},[h,h]]=0$, ie. $[\overline{X},R]=0$. We can write $[\overline{X},R(Y,Z)]=R([\overline{X},Y],Z)+R(Y,[\overline{X},Z])$, for all $Y,Z\in\chi(TM)$. If $\overline{X}$ and $\overline{Y}$ are elements of a commutative ideal of $\overline{A_\Gamma}$, we find
\begin{eqnarray}\label{E6.2}
[\overline{X},R(\overline{Y},Z)]=R(\overline{Y},[\overline{X},Z]),\ \forall Z\in\chi(TM).
\end{eqnarray}
If the horizontal and projectable vector fields of the nullity space of the curvature $R$ is zero, the semi-basic vector $2-$form $R$ is non-degenerate on $\overline{\chi(M)}\times\chi(TM)$. The only possible case for the equations (\ref{E6.2}) is that the commutative ideal of $\overline{A_\Gamma}$ is at most formed by constant vector fields of $\overline{A_\Gamma}$, according to the proposition \ref{P6.1}, the derived ideal of $\overline{A_\Gamma}$ never coincides with $\overline{A_\Gamma}$ if this ideal formed by constant vector fields is not zero.
\end{proof}

\begin{example}
We take $ M = \mathbb{R}^3 $, a spray $ S $:
\begin{equation*}
S= y^1\frac{\partial}{\partial x^1}+y^2\frac{\partial}{\partial x^2}+y^3\frac{\partial}{\partial x^3} -2(e^{x^3}(y^1)^2+y^2y^3) \frac{\partial}{\partial y^1}.
\end{equation*}
and the linear connection $\Gamma=[J,S]$. The non-zero coefficients of $ \Gamma $ are
\begin{eqnarray*}
&&\Gamma^1_1=2e^{x^3}y^1,\ \Gamma^1_2= y^3,\ \Gamma^1_3=y^2.
\end{eqnarray*}
A base of the horizontal space of $\Gamma$ is written
\begin{eqnarray*}
&&\frac{\partial}{\partial x^1}-2e^{x^3}y^1\frac{\partial}{\partial y^1},\\
&&\frac{\partial}{\partial x^2}-y^3\frac{\partial}{\partial y^1},\\
&&\frac{\partial}{\partial x^3}-y^2\frac{\partial}{\partial y^1}.
\end{eqnarray*}
The horizontal nullity space of the curvature is generated as a module by 
\begin{eqnarray*}
(y^1-y^2)\frac{\partial}{\partial x^2}+y^3\frac{\partial}{\partial x^3}-y^1y^3\frac{\partial}{\partial y^1}.
\end{eqnarray*}
The horizontal nullity space is not generated as a module by projectable vector fields in $hN_R$. This linear connection according to the proposition \ref{P4.3} cannot come from an energy function.\\

The Lie  algebra $ \overline {A_\Gamma} $ is generated as Lie algebra by:
\begin{eqnarray*}
g_1= x^1\frac{\partial}{\partial x^1}+x^2\frac{\partial}{\partial x^2}-\frac{\partial}{\partial x^3}+y^1\frac{\partial}{\partial y^1}+y^2\frac{\partial}{\partial y^2},\ g_2=\frac{\partial}{\partial x^1},\
g_3=\frac{\partial}{\partial x^2}.
\end{eqnarray*}
The Lie algebra $\overline{A_\Gamma}$ is that of affine vector fieds containing the commutative ideal $\{g_2,g_3\}$.
\end{example}
\section{Lie algebras of infinitesimal isometries}
\begin{definition}
A vector field $X$ on a Riemannian manifold $(M,E)$ is called infinitesimal automorphism of  the symplectic form $\Omega$ if $L_X\Omega=0$.\\
The set of infinitesimal automorphisms of $\Omega$ forms a Lie algebra. We denote this Lie algebra by $A_g$, in general of infinite dimension.
\end{definition}
\begin{theorem}\label{T7.2}
We denote $\overline{A_g}=A_g\cap \overline{\chi(M)}$. The Lie algebra $\overline{A_g}$ is semi-simple if and only if the horizontal nullity space of the Nijenhuis tensor of $\Gamma$ is zero and, the derived ideal of $\overline{A_g}$ coincides with $\overline{A_g}$.
\end{theorem}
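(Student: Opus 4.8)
The plan is to reduce Theorem \ref{T7.2} to Theorem \ref{T6.2} by showing that on the Riemannian manifold $(M,E)$ the algebra $\overline{A_g}$ is a Lie subalgebra of $\overline{A_\Gamma}$ to which the arguments of Section 6 apply. First I would establish the inclusion $\overline{A_g}\subseteq\overline{A_\Gamma}$. Let $\overline{X}\in\overline{A_g}$, so $\overline{X}$ is a complete lift with $L_{\overline{X}}\Omega=0$, $\Omega=dd_JE$. Using the cited result $[J,\overline{X}]=0$ of \cite{LEH} one has $[L_{\overline{X}},d_J]=d_{[\overline{X},J]}=0$, hence $L_{\overline{X}}\Omega=d\,d_J(L_{\overline{X}}E)$. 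In natural coordinates $L_{\overline{X}}E=\tfrac12(L_Xg)_{ij}\,y^iy^j$, and since $d\,d_J\!\big(\tfrac12 h_{ij}(x)y^iy^j\big)$ contains the linearly independent term $h_{ij}\,dy^i\wedge dx^j$, the equality $L_{\overline{X}}\Omega=0$ forces $L_Xg=0$, i.e. $L_{\overline{X}}E=0$. Combining $L_{\overline{X}}\Omega=0$ and $L_{\overline{X}}E=0$ with $i_S\Omega=-dE$ gives $i_{[\overline{X},S]}\Omega=i_{[\overline{X},S]}\Omega+i_S(L_{\overline{X}}\Omega)=L_{\overline{X}}(i_S\Omega)=-d(L_{\overline{X}}E)=0$, and non-degeneracy of $\Omega$ yields $[\overline{X},S]=0$, so $\overline{X}\in\overline{A_S}=\overline{A_\Gamma}$ by Proposition \ref{P5.1}. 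Consequently every $\overline{X}\in\overline{A_g}$ satisfies $[\overline{X},\Gamma]=0$, hence $[\overline{X},R]=0$, and relation \eqref{E6.2} holds for any two elements of a commutative ideal of $\overline{A_g}$.

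For the direct implication, suppose $\overline{A_g}$ is semi-simple. Then its derived ideal coincides with $\overline{A_g}$ by the classical characterisation of semi-simplicity, and $\overline{A_g}$ admits no nonzero commutative ideal. By Proposition \ref{P5.3}, $\overline{A_\Gamma}^h$ is a commutative ideal of $\overline{A_\Gamma}$; since $\overline{A_g}$ is a subalgebra, $\overline{A_g}\cap\overline{A_\Gamma}^h$ is a commutative ideal of $\overline{A_g}$, hence zero. On $(M,E)$, Proposition \ref{P4.3} exhibits $hN_R=hN_{\mathcal R}$ as a module generated by projectable vector fields lying in $hN_R$; as the connection is the Cartan connection attached to $g$, such a projectable horizontal vector field in the nullity space is an infinitesimal isometry, hence lies in $\overline{A_g}\cap\overline{A_\Gamma}^h=0$. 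Therefore $hN_R=0$, i.e. the horizontal nullity space of the Nijenhuis tensor of $\Gamma$ is zero.

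Conversely, assume $hN_R=0$ and $[\overline{A_g},\overline{A_g}]=\overline{A_g}$. Let $\mathfrak{a}$ be a commutative ideal of $\overline{A_g}$ and $\overline{X},\overline{Y}\in\mathfrak{a}$; relation \eqref{E6.2} gives $[\overline{X},R(\overline{Y},Z)]=R(\overline{Y},[\overline{X},Z])$ for all $Z\in\chi(TM)$. Since $hN_R=0$, the semi-basic $2$-form $R$ is non-degenerate on $\overline{\chi(M)}\times\chi(TM)$, and exactly as in the proof of Theorem \ref{T6.2} this forces $\mathfrak{a}$ to consist of constant vector fields. Running the decomposition argument of Proposition \ref{P6.1} inside the subalgebra $\overline{A_g}$ — writing every $\overline{X}\in\overline{A_g}$ as $\overline{X_1}+\overline{X_2}$ with $\overline{X_2}$ in the affine ideal generated by these constants and $[\overline{X_1},\overline{X_2}]=0$ — shows that the derived ideal of $\overline{A_g}$ cannot equal $\overline{A_g}$ unless $\mathfrak{a}=0$. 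As the derived ideal is all of $\overline{A_g}$ by hypothesis, every commutative ideal of $\overline{A_g}$ vanishes, so the radical of $\overline{A_g}$ is zero and $\overline{A_g}$ is semi-simple.

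The step I expect to be the main obstacle is the identification, used in the direct implication, of the projectable generators of $hN_R$ produced by Proposition \ref{P4.3} with genuine elements of $\overline{A_g}$, i.e. checking that a projectable horizontal vector field in the nullity space of the curvature of the Cartan connection of $g$ is a Killing field (equivalently, that the flat de Rham factor of $(M,g)$ contributes an abelian ideal of infinitesimal isometries). Once this is in place, the transfer of Proposition \ref{P6.1} to $\overline{A_g}$ is routine, since $\overline{A_g}\subseteq\overline{A_\Gamma}$ and the constant vector fields occurring in the argument are the same for both algebras.
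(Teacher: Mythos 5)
Your proposal follows the same route as the paper, whose entire proof of Theorem \ref{T7.2} is the single sentence ``this is the application of Proposition \ref{P4.3} and Theorem \ref{T6.2}''; what you have written is a correct and considerably more detailed unpacking of that reduction, including the inclusion $\overline{A_g}\subseteq\overline{A_\Gamma}$ via $L_{\overline{X}}\Omega=dd_J(L_{\overline{X}}E)$ and $i_S\Omega=-dE$, which the paper leaves implicit. The one step you flag as a possible obstacle does close, and easily: an element of $\overline{A_\Gamma}^h$ is a complete lift $\overline{X}$ that coincides with the horizontal lift of its projection $X$, which in coordinates reads $\frac{\partial X^i}{\partial x^j}+\Gamma^i_{kj}X^k=0$, i.e.\ $X$ is parallel for the canonical (metric) connection; a parallel field satisfies $(L_Xg)_{jk}=\nabla_jX_k+\nabla_kX_j=0$, so it is Killing and $\overline{A_\Gamma}^h\subseteq\overline{A_g}$, which is exactly what your direct implication needs. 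With that observation inserted, your argument is complete and matches the paper's intent.
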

\begin{proof}
This is the application of proposition \ref{P4.3} and theorem \ref{T6.2}.\\
For more information, see \cite{ANO3} and \cite{ANH1}.
\end{proof}
\begin{example}
We take $ M = \mathbb{R}^4 $ and the energy function is written:
$$E=\frac{1}{2}(e^{x^3}(y^1)^2+(y^2)^2+e^{x^1}(y^3)^2+e^{x^2}(y^4)^2).$$
The non-zero linear connection coefficients are
\begin{eqnarray*}
&&\Gamma^1_1=\frac{y^3}{2},\ \Gamma^1_3= -\frac{y^3e^{x^1-x^3}-y^1}{2},\ \Gamma^2_4=-\frac{y^4e^{x^2}}{2},\\  &&\Gamma^3_1=-\frac{y^1e^{x^3-x^1}-y^3}{2},\ \Gamma^3_3=\frac{y^1}{2},\ \Gamma^4_2= \frac{y^4}{2},\ \Gamma^4_4= \frac{y^2}{2}.
\end{eqnarray*}
The horizontal nullity space of the curvature is zero. \\
The Lie algebra $ \overline {A_\Gamma} $ is generated as Lie algebra by:
\begin{eqnarray*}
g_1&=& x^4 \frac{\partial}{\partial x^2}-(-e^{-x^2}+\frac{(x^4)^2}{4})\frac{\partial}{\partial x^4}+y^4\frac{\partial}{\partial y^2}-(\frac{x^4y^4}{2}+y^2 e^{-x^2})\frac{\partial}{\partial y^4},\\
g_2&=& -2\frac{\partial}{\partial x^2}+x^4\frac{\partial}{\partial x^4}+y^4\frac{\partial}{\partial y^4},\
g_3=\frac{\partial}{\partial x^4},\
g_4= \frac{\partial}{\partial x^1}+\frac{\partial}{\partial x^3}.
\end{eqnarray*}
We see that $g_4$ is the center of $\overline{A_\Gamma}$ corresponding to the second case of the proposition \ref{P6.1}, while the Lie algebra $\overline{A_g}$ is generated as a Lie algebra by $g_1,\ g_2,\ g_3$. The  Lie algebra $\overline{A_g}$ is simple and isomorphic to $sl(2)$.
\end{example}
\section{Finite dimensional Lie algebra}
In this section, we consider only a finite-dimensional Lie algebra over the field $\mathbb{K}$ of zero characteristic class. The notions and notations are those of \cite{BOU}.
\begin{theorem}
The Lie algebra $\mathfrak{g}$ over $\mathbb{K}$ of finite dimension is semi-simple if and only if the derived ideal of $\mathfrak{g}$ coincides with $\mathfrak{g}$, any derivation of $\mathfrak{g}$ is inner and, its adjoint representation is semi-simple.
\end{theorem}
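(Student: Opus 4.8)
The statement is the standard algebraic characterisation of semi-simplicity, so the plan is to argue entirely within the theory of finite-dimensional Lie algebras over $\mathbb{K}$ (with $\operatorname{char}\mathbb{K}=0$), as in \cite{BOU}; none of the spray material above is needed.

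For the direct implication, assume $\mathfrak{g}$ is semi-simple, and treat the three assertions in turn. (i) By the structure theorem $\mathfrak{g}=\mathfrak{g}_1\oplus\cdots\oplus\mathfrak{g}_k$ with each $\mathfrak{g}_i$ a simple, hence non-abelian, ideal, so $[\mathfrak{g}_i,\mathfrak{g}_i]=\mathfrak{g}_i$ and therefore $[\mathfrak{g},\mathfrak{g}]=\mathfrak{g}$. (ii) That the adjoint representation $\operatorname{ad}\colon\mathfrak{g}\to\mathfrak{gl}(\mathfrak{g})$ is semi-simple is exactly Weyl's complete reducibility theorem, valid because $\operatorname{char}\mathbb{K}=0$. (iii) For the innerness of derivations I would use the Killing form: $\operatorname{ad}(\mathfrak{g})$ is an ideal of $\mathfrak{d}=\operatorname{Der}(\mathfrak{g})$ since $[\delta,\operatorname{ad}x]=\operatorname{ad}(\delta x)$ for $\delta\in\mathfrak{d}$, $x\in\mathfrak{g}$; the Killing form of $\mathfrak{d}$ restricts on $\operatorname{ad}(\mathfrak{g})\cong\mathfrak{g}$ to the Killing form of $\mathfrak{g}$, which is non-degenerate by Cartan's criterion, whence $\mathfrak{d}=\operatorname{ad}(\mathfrak{g})\oplus\operatorname{ad}(\mathfrak{g})^{\perp}$ with $\operatorname{ad}(\mathfrak{g})^{\perp}$ an ideal; for $\delta$ in this complement, $\operatorname{ad}(\delta x)=[\delta,\operatorname{ad}x]\in\operatorname{ad}(\mathfrak{g})\cap\operatorname{ad}(\mathfrak{g})^{\perp}=0$, so $\delta x\in\mathfrak{z}(\mathfrak{g})=0$ for all $x$, i.e. $\delta=0$, giving $\operatorname{Der}(\mathfrak{g})=\operatorname{ad}(\mathfrak{g})$.

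For the converse, assume all three conditions and let $\mathfrak{r}=\operatorname{rad}(\mathfrak{g})$; it suffices to prove $\mathfrak{r}=0$. Being an ideal, $\mathfrak{r}$ is an $\operatorname{ad}(\mathfrak{g})$-submodule of $\mathfrak{g}$, so by semi-simplicity of the adjoint representation there is an $\operatorname{ad}(\mathfrak{g})$-submodule $\mathfrak{m}$ — equivalently an ideal — with $\mathfrak{g}=\mathfrak{r}\oplus\mathfrak{m}$. Since $\mathfrak{r}$ and $\mathfrak{m}$ are ideals with zero intersection, $[\mathfrak{r},\mathfrak{m}]=0$, so $\mathfrak{g}=\mathfrak{r}\times\mathfrak{m}$ as Lie algebras and $\mathfrak{m}\cong\mathfrak{g}/\mathfrak{r}$ is semi-simple; hence $[\mathfrak{g},\mathfrak{g}]=[\mathfrak{r},\mathfrak{r}]\oplus[\mathfrak{m},\mathfrak{m}]=[\mathfrak{r},\mathfrak{r}]\oplus\mathfrak{m}$. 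Condition $[\mathfrak{g},\mathfrak{g}]=\mathfrak{g}$ then forces $[\mathfrak{r},\mathfrak{r}]=\mathfrak{r}$, impossible for a non-zero solvable ideal, so $\mathfrak{r}=0$ and $\mathfrak{g}=\mathfrak{m}$ is semi-simple. (The hypothesis that every derivation is inner can be used in place of $[\mathfrak{g},\mathfrak{g}]=\mathfrak{g}$ here: on $\mathfrak{g}=\mathfrak{z}(\mathfrak{g})\times\mathfrak{m}$ every endomorphism of $\mathfrak{z}(\mathfrak{g})$ extends to a derivation while $\operatorname{ad}(\mathfrak{g})$ vanishes on $\mathfrak{z}(\mathfrak{g})$, so $\operatorname{Der}(\mathfrak{g})=\operatorname{ad}(\mathfrak{g})$ forces $\mathfrak{z}(\mathfrak{g})=0$ — the three conditions are thus mildly redundant, any one of the first two together with the third giving the converse.)

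The real content sits in the direct implication, where the three assertions rest on the deep classical theorems — Cartan's criterion for non-degeneracy of the Killing form, Weyl's complete reducibility theorem, and the vanishing of $H^{1}(\mathfrak{g},\mathfrak{g})$ that is packaged here in the Killing-form splitting of $\operatorname{Der}(\mathfrak{g})$. The converse is essentially formal once the adjoint representation is assumed semi-simple; the only points needing a little care are that $\mathfrak{g}/\operatorname{rad}(\mathfrak{g})$ is semi-simple and that an $\operatorname{ad}$-invariant vector-space complement to an ideal is again an ideal.
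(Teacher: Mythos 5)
Your proof is correct, and while the direct implication is the same standard material the paper simply delegates to Bourbaki, your converse takes a genuinely different route. Both arguments begin identically, by using semi-simplicity of the adjoint representation to split $\mathfrak{g}$ as a direct product of a semi-simple ideal and an abelian one (the radical, which here is the centre). From that point the paper argues by constructing an explicit derivation $D$ equal to the identity on the abelian factor $I$ and zero on the complement, computing $\operatorname{tr}D=\dim I$, and observing that if $\mathfrak{g}=[\mathfrak{g},\mathfrak{g}]$ every inner derivation lies in $sl(\mathfrak{g})$ and so has trace zero; the hypothesis that $D$ is inner then forces $\dim I=0$. This genuinely uses all three hypotheses at once. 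You instead compute $[\mathfrak{g},\mathfrak{g}]=[\mathfrak{r},\mathfrak{r}]\oplus\mathfrak{m}$ directly and let the condition $[\mathfrak{g},\mathfrak{g}]=\mathfrak{g}$ force $\mathfrak{r}=[\mathfrak{r},\mathfrak{r}]=0$; this is shorter, avoids the trace computation, and makes visible something the paper's proof obscures, namely that the converse already follows from two of the three conditions (and, as your parenthetical shows, innerness of derivations can replace the derived-ideal condition, since an inner derivation annihilates the centre while the identity on the centre extends to a derivation). The paper's trace argument buys nothing extra here beyond tying the three hypotheses together in a single contradiction; your version is cleaner and exposes the redundancy, which is worth stating explicitly since the theorem as phrased suggests all three conditions are needed.
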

\begin{proof}
If $\mathfrak{g}$ is a semi-simple Lie algebra over $\mathbb{K}$ of finite dimension, the three conditions are verified \cite{BOU}.\\
Conversely, if the adjoint representation of $\mathfrak{g}$ is semi-simple, $\mathfrak{g}$ is a product of a semi-simple Lie algebra and an abelian Lie algebra cf.\cite{BOU}. Let be $I$ the abelian Lie algebra of $\mathfrak{g}$. For $a,b\in \mathfrak{g}$ such that $a,b\notin I$ and $[a,b]\neq 0$, then we have $[a,b]\notin I$. Let $e_1,\dots,e_p,e_{p+1},\dots, e_n$ be a base of $\mathfrak{g}$ such that $e_1,\dots,e_p$ $(p<n)$ belong to $I$. By defining a linear map $D$ such that $D(e_i)=e_i$, $1\leq i\leq p$, and $D(e_{p+1})=0,\dots, D(e_n)=0$, the map $D$ is a derivation of $\mathfrak{g}$, its trace function is equal to $p$. If $\mathfrak{g}=[\mathfrak{g},\mathfrak{g}]$, the adjoint representation of $\mathfrak{g}$ belongs to $sl(\mathfrak{g})$ which is semi-simple \cite{BOU} p.71. Its trace function is zero. We end up with a contradiction if $I\neq 0$ and if the derivation $D$ is inner.
\end{proof}
\begin{remark}\label{R.8.3}
We can see such reasoning in an example \cite{ANH1} \S 5.
\end{remark}
\begin{remark}\label{R.8.2}
 For a Lie algebra of countable dimension, see our results in \cite{RRA2}.
\end{remark}

The author has never worked with other entities apart from the Malagasy students he directed for research. The author became aware of these problems in the 1980s when he studied and researched at the Institut Fourier Grenoble I under the direction of Professor Joseph Klein.

\end{document}